\pgfplotsset{compat=1.5}
\newtheorem{theorem}{Theorem}
\newtheorem{lemma}{Lemma}
\title{Symmetric separable convex resource allocation problems with structured disjoint interval bound constraints}
\author{Martijn H. H. Schoot Uiterkamp \\ Tilburg University }
\begin{document}
\maketitle

\begin{abstract}
Motivated by the problem of scheduling electric vehicle (EV) charging with a minimum charging threshold in smart distribution grids, we introduce the resource allocation problem (RAP) with a symmetric separable convex objective function and disjoint interval bound constraints. In this RAP, the aim is to allocate an amount of resource over a set of $n$ activities, where each individual allocation is restricted to a disjoint collection of $m$ intervals. This is a generalization of classical RAPs studied in the literature where in contrast each allocation is only restricted by simple lower and upper bounds, i.e., $m=1$. We propose an exact algorithm that, for four special cases of the problem, returns an optimal solution in $O \left(\binom{n+m-2}{m-2} (n \log n + nF) \right)$ time, where the term $nF$ represents the number of flops required for one evaluation of the separable objective function. In particular, the algorithm runs in polynomial time when the number of intervals $m$ is fixed. Moreover, we show how this algorithm can be adapted to also output an optimal solution to the problem with integer variables without increasing its time complexity. Computational experiments demonstrate the practical efficiency of the algorithm for small values of $m$ and in particular for solving EV charging problems.

\end{abstract}

\section{Introduction}

\subsection{Resource allocation problems with disjoint constraints and EV charging}

The resource allocation problem (RAP) is a classical problem in the operations research literature with many applications. In its most basic version, also referred to as the simple RAP, the problem asks for an allocation of a given amount of resource $R$ over a set $N := \lbrace 1, \ldots, n \rbrace$ of activities, subject to lower and upper bounds $l_i$ and $u_i$ on each allocation $x_i$ to an activity $i \in N$. The goal is to select an allocation that minimizes the sum of individual costs of this allocation. In this paper, we consider cost functions of the form $\sum_{i \in N} \phi(x_i + b_i)$, where $\phi \colon \mathbb{R} \rightarrow \mathbb{R}$ is a continuous convex function and $b \in \mathbb{R}^n$ acts as a shift vector. This means that the simple RAP can be formulated as follows:
\begin{align*}
\text{Simple RAP} \colon \ \min_{x \in \mathbb{R}^n} \ & \sum_{i \in N} \phi(x_i + b_i) \ \\
\text{s.t. } & \sum_{i \in N} x_i = R; \\
& l_i \leq x_i \leq u_i, \quad i \in N. 
\end{align*}
This problem has applications in many different fields such as finance, telecommunications, and machine learning (see \cite{Patriksson2008} for a survey). Many efficient methods exist to solve simple RAPs and we refer to \cite{Patriksson2015,SchootUiterkamp2022} for recent overviews of such methods and further problem properties.

In most studied extensions and variations of RAPs, each individual allocation is restricted to a single closed interval. In this paper, we study a generalization of the simple RAP where instead the feasible region for each variable is the union of $m$ closed intervals with $m > 1$. We refer to this problem as the RAP with disjoint interval bound constraints (RAP-DIBC):
\begin{align}
\text{RAP-DIBC} \colon \ \min_{x \in \mathbb{R}^n} \ & \sum_{i \in N} \phi(x_i + b_i) \nonumber \\
\text{s.t. } & \sum_{i \in N} x_i = R; \label{eq_res} \\
& x_i \in \cup_{j \in M} [l_{i,j} , u_{i,j}], \quad i \in N, \label{eq_box}
\end{align}
where, $M := \lbrace 1, \ldots, m \rbrace$, and $l, u \in \mathbb{R}^{n \times m}$. We assume without loss of generality that for each $i \in N$ the intervals $[l_{i,j}, u_{i,j}]$ are disjoint and that the vector $b$ is non-increasing.

Our motivation for studying this problem stems from its application in decentralized energy management (DEM) \cite{Siano2014, Esther2016}. In DEM, the goal is to optimize the simultaneous energy consumption of multiple devices within, e.g., a neighborhood. Within a DEM system, devices optimize their own consumption locally and the control system coordinates the local optimization of these devices to optimize certain neighborhood objectives (as opposed to other paradigms such as centralized energy management).

In particular, we are interested in the local optimization of a specific device class within DEM, namely the scheduling of electric vehicles (EVs) that adher to a minimum-charging threshold. Such a threshold means that, at any given moment, the EV is either idle (charges at rate zero) or charges within a particular range of rates. This is primarily due to technical constraints on EV batteries that prevent them from charging at rates very close to zero \cite{Young2013}. Moreover, charging at low rates is generally more inefficient and thus should be avoided \cite{ApostolakiIosifidou2017}. 

Mathematically, the problem can be stated as follows. We consider a division of the scheduling horizon into $T$ time intervals of length $\Delta t$. For each $t \in \lbrace 1, \ldots, T \rbrace$, we introduce the variable $x_t$ that denotes the power consumption of the EV during time interval $t$. Moreover, the parameter $p_t$ denotes the remaining static power consumption during interval $t$. We assume the total energy demand of the EV to be known on forehand and denote this value by $R$. The minimum-threshold restriction entails a minimum threshold $X^{\min}$ and a maximum charging rate $X^{\max}$ so that during an interval $t$ either the EV is idle ($x_t = 0$) or charges at a rate in between the threshold and the maximum rate ($X^{\min} \leq x_t \leq X^{\max}$). The objective is to minimize the peak consumption of the combined EV and static load, which can be expressed by the function $\sum_{t=1}^T \phi(x_t + p_t)$. Common choices for $\phi$ are the quadratic function, absolute value function, or hinge max functions (see also Section~5.2 of \cite{SchootUiterkamp2022}). Summarizing, this leads to the following formulation of the EV scheduling problem:
\begin{align*}
\text{Min-Thres-EV}\colon \ \min_{x \in \mathbb{R}^T} \ & \sum_{t=1}^T \phi(x_t + p_t) \\
\text{s.t. } & \sum_{t=1}^T \Delta t x_t = R; \\
& x_t \in \lbrace 0 \rbrace \cup [X^{\min}, X^{\max}], \quad t \in \lbrace 1,\ldots, T \rbrace.
\end{align*}
Note, that this problem is an instance of RAP-DIBC with $m=2$ and $l_{i,1} = u_{i,1} = 0$, $l_{i,2} = X^{\min}$, and $u_{i,2} = X^{\max}$ for all $i \in N$.


An important aspect of the DEM paradigm is that device-level problems, such as the minimum-threshold EV charging problem, are solved on local embedded systems located within, e.g., households or the charging equipment. Therefore, the utilized device-level optimization algorithms must be very fast in practice because often they are called multiple times within the corresponding DEM system. Furthermore, the embedded systems on which the algorithms run generally have limited computational power \cite{Beaudin2015}. As a consequence, efficient and tailored device-level optimization algorithms for, e.g., the minimum-threshold EV charging problem are crucial ingredients for the real-life implementation of DEM systems.

Disjoint interval bound constraints of the form (\ref{eq_box}) occur also in other applications. For instance, they appear in portfolio optimization problems to model minimum buy-in restrictions (see, e.g., \cite{Jobst2001}). In such problems, investments in assets cannot be arbitrarily small and must be above a given threshold if the investment is actually made. This means that the amount $x_i$ that can be invested in asset $i$ is either zero or lies within a specific interval, leading to constraints of the form $x_i \in \lbrace 0 \rbrace \cup [l_i, u_i]$. If short-selling is allowed, the invested amount can also be negative. This leads to constraints of the form $x_i \in [l'_i, u'_i] \cup \lbrace 0 \rbrace \cup [l_i, u_i]$ with $l'_i < u'_i < 0$, which corresponds to the case $m=3$.

RAP-DIBC also occurs as a subproblem when optimizing non-separable functions over disjoint interval bound constraints using the alternating direction method of multipliers (ADMM). More precisely, in this setting, one of the two iterate updates within the standard ADMM framework requires a projection of the current iterate onto the disjoint interval bound constraints (see, e.g., \cite{Boyd2011}). This subproblem is equivalent to RAP-DIBC when we choose $\phi(x_i + b_i) := (x_i + b_i)^2$. One concrete example of an optimization problem with disjoint interval bound constraints that is successfully solved in this way is given in \cite{Ogarko2021} for inverse problems in geophysics.




For $m = 1$, RAP-DIBC reduces to the simple RAP and the problem can be solved efficiently in polynomial time (see also Section~\ref{sec_QK}). However, already for $m = 2$, RAP-DIBC is NP-hard since the special case with $l_{i,j} = u_{i,j}$ for all $i \in N$ and $j \in M$ reduces from the subset-sum problem. In fact, even if the collection of intervals is the same for each variable, the problem is NP-hard since it reduces from the even/odd partition problem \cite{vanderKlauw2017}. For the special case $m=2$ with $l_{i,1} = u_{i,1} = 0$ , the problem with a weighted linear objective is known in the literature as the knapsack problem with setups \cite{Michel2009} or with semi-continuous variables \cite{Sun2013, deFarias2013}. Solution approaches for this problem exploit the knapsack structure but generally do not consider the computational complexity of the proposed algorithms. One exception is \cite{Diao2017}, who consider an unweighted linear objective with a relaxation of the resource constraint~(\ref{eq_res}) and demonstrate polynomial-time solvability for specific choices of the disjoint interval bound constraints (\ref{eq_box}). Moreover, \cite{SchootUiterkamp2018} developed an efficient algorithm for Min-Thres-EV with $O(n \log n)$ time complexity for the quadratic objective $\phi(x_i + b_i) = (x_i + b_i)^2$.

\subsection{Contributions}

In this paper, we consider a special case of RAP-DIBC where the collection of intervals is the same for each variable except potentially the first and last interval. More precisely, for each $j \in M \backslash \lbrace 1 \rbrace$ we have $l_{i,j} = \tilde{l}_j$ for some $\tilde{l}_j \in \mathbb{R}$ and for each $j \in M \backslash \lbrace m \rbrace$ we have $u_{i,j} = \tilde{u}_j$ for all $i \in N$ for some $\tilde{u}_j \in \mathbb{R}$. With regard to the lengths of the first and last intervals, we distinguish between two cases for each of them: either their lengths are non-increasing or they are at least the maximum distance between two consecutive intervals. This leads to four different combinations of options for their lengths, which we encode as described in Table~\ref{tab_encode}. In the remainder of this paper, whenever we refer to RAP-DIBC, we mean one of these four special cases of the problem, unless stated otherwise.
\begin{table}[ht!]
\centering
\begin{tabular}{l | l l}
\toprule
& \multicolumn{2}{c}{Length of last interval} \\
& & $\min_{i \in N} (u_{i,m} - l_{i,m}) $  \\
Length of first interval &  $u_{s,m} \geq u_{t,m}$ whenever $s < t$ & $\quad \geq \max_{j \in M \backslash \lbrace m \rbrace} (\tilde{l}_{j+1} - \tilde{u}_j)$ \\
\midrule
$l_{s,1} \leq l_{t,1}$ whenever $s < t$ & (F1,L1) & (F1,L2) \\
$ \min_{i \in N} (u_{i,1} - l_{i,1}) $  \\
$\quad \geq \max_{j \in M \backslash \lbrace m \rbrace} (\tilde{l}_{j+1} - \tilde{u}_j)$ & (F2,L1) & (F2,L2) \\
\bottomrule
\end{tabular}
\caption{Encoding for the considered options of the lengths of the first and last intervals.}
\label{tab_encode}
\end{table}

A naive approach to solve RAP-DIBC would be to consider each possible combination of intervals for the variables separately and solve the corresponding simple RAP. Since one instance of this simple RAP can be solved in $O(n)$ time \cite{SchootUiterkamp2022}, this approach has a non-polynomial time complexity of $O(m^n n F)$, where $F$ denotes the number of flops required for one evaluation of the function $\phi$. Instead, in this paper, we propose an algorithm for solving all the four special cases (F1,L1), (F1,L2), (F2,L1), and (F2,L2) that runs in $O \left(\binom{n+m-2}{m-2} (n \log n + nF) \right)$ time. Note that this complexity is polynomial in $n$ for fixed $m$. We also consider the restriction of the problem to integer variables, as is common in the RAP literature \cite{Ibaraki1988, Hochbaum1994}. We show that only minimal adjustments to the original algorithm are necessary to have it also output an optimal solution to the integral problem. This adjustment does not change the worst-case time complexity of the algorithm.

Our approach is based on two core properties of the problem. We first show that there exists an optimal solution with a specific monotonicity property regarding the used intervals. More precisely, there exists an optimal solution $x^*$ such that, given $i \in N$, we have that $x^*_i \in [l_{i,j}, u_{i,j}]$ implies that $x^*{i'} > u_{i,j}$ for any $i' > i$. As a consequence, compared to the naive approach, we only need to consider combinations of intervals that satisfy this property, which is $O \left(\binom{n + m -1}{m - 1} \right)$.  Secondly, we demonstrate that particular sequences of instances of the simple RAPs corresponding to these combinations can be solved with the same time complexity as solving one instance. For this, we exploit a known monotonicity property of optimal solutions to simple RAPs and the properties of the so-called sequential breakpoint search approach to solve them \cite{Kiwiel2008,Patriksson2015} (see also Section~\ref{sec_QK}). We exploit these properties to show that it is not necessary to solve each instance in the sequence from scratch. Instead, we obtain the input parameters for the next instance in the sequence from the optimal solution and the bookkeeping parameters of the sequential breakpoint search for the previous instance. This can be done efficiently in $O(1)$ time per instance and solving an instance from scratch takes at least $O(n)$ time \cite{SchootUiterkamp2022}. Thus, the overall time complexity of solving the sequence of instances reduces from $O(n^2)$ to $O(n \log n)$, i.e., to the time complexity of the sequential breakpoint search approach for solving a single instance of the simple RAP.

Our approach also provides a partial answer to an open question in \cite{SchootUiterkamp2023}. The simple RAP is known to have a nice reduction property, namely that there exists a solution to the problem that is simultaneously optimal for any choice of continuous convex function $\phi$. The open question posed in the conclusions of \cite{SchootUiterkamp2023} asks whether this property also holds for variants of the simple RAP, in particular for problems with semi-continuous variables. We demonstrate that this is unfortunately not the case by constructing an instance of RAP-DIBC as a counterexample. However, we do show that the reduction property can in fact be used to speed up and simplify several parts of our approach.

We evaluate the performance of our algorithm on realistic instances of Min-Thres-EV and of a set of synthetically generated instances with quadratic objective functions, varying sizes of $n$ and some small values of $m$. Our evaluations demonstrate the suitability of our algorithm for integration in DEM systems due to its small execution time. Furthermore, when evaluated on the synthetical instances, our algorithm outperforms the general-purpose solver Gurobi for most considered instances by up tot two orders of magnitude.

Summarizing, our contributions are as follows:
\begin{enumerate}
\item We introduce the symmetric separable convex RAP with disjoint interval bound constraints that has applications in, among others, electric vehicle charging with minimum charging thresholds;
\item We present an algorithm for this problem whose worst-case time complexity is polynomial in the number of variables, provide the number of intervals $m$ is fixed;
\item We show that the integer version of the problem can be solved using the same algorithm with only a minor adjustment that does not affect the worst-case time complexity;
\item We demonstrate the scalability of our approach for small numbers of disjoint intervals $m$ and its suitability for solving problems arising in DEM systems.
\end{enumerate}

The outline of the remainder of this paper is as follows. In Section~\ref{sec_prob}, we study the feasibility of RAP-DIBC and derive the crucial monotonicity property of optimal solutions to RAP-DIBC. In Section~\ref{sec_alg_init}, we use this property to derive an initial solution approach and algorithm for RAP-DIBC and in Section~\ref{sec_alg_final}, we present an improvement of this algorithm with an $O(\log n)$ time complexity gain. Section~\ref{sec_integer} discusses the extension of our approach and algorithm to the integer-valued version of RAP-DIBC. We present our computational results in Section~\ref{sec_eval} and, finally, present our conclusions in Section~\ref{sec_concl}.

\section{Problem analysis}
\label{sec_prob}

In this section, we establish two important properties of RAP-DIBC. First, in Section~\ref{sec_feasible}, we investigate the feasibility of RAP-DIBC and identify several sufficient conditions for feasibility that can be checked in polynomial time. Second, in Section~\ref{sec_opt}, we establish the existence of a monotone optimal solution to RAP-DIBC. The latter property is the crucial ingredient for our initial solution approach in Section~\ref{sec_alg_init}.

\subsection{Feasibility}
\label{sec_feasible}

We consider the complexity of finding a feasible solution to RAP-DIBC in each of the four cases (F1,L1), (F1,L2), (F2,L1), and (F2,L2). The main result of this section, Lemma~\ref{lemma_feasible}, demonstrates that, while finding a feasible solution for the special case (F1,L1) remains NP-complete, a feasible solution to the other three cases can be found in polynomial time by means of a greedy procedure. This suggests that these cases are simpler than (F1,L1).

\begin{lemma}
Deciding on the existence of a feasible solution for RAP-DIBC to the special case (F1,L1) is NP-complete. For the other three cases, a feasible solution can be found in polynomial time if
\begin{description}
\item[(F1,L2)] $R > n \tilde{u}_1$ or $R < \tilde{l}_m + \sum_{i < n} l_{i,1}$;
\item[(F2,L1)] $R > \sum_{i < n} u_{i,m} + \tilde{u}_1$ or $R < n \tilde{l}_m$;
\item[(F2,L2)] $\sum_{i \in N} l_{i,1} \leq R \leq \sum_{i \in N} u_{i,m}$.
\end{description}
\label{lemma_feasible}
\end{lemma}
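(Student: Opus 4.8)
\emph{Plan.} I would prove the two halves separately: the NP-completeness of (F1,L1), and the polynomial-time tractability of the three remaining cases. For the latter the common engine is an explicit greedy sweep whose correctness rests on the interval-length inequalities built into F2 and L2, so I would develop the (F2,L2) argument first and then reuse it for the two ``mixed'' cases.

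For (F1,L1), membership in NP is immediate: an allocation $x$ is a certificate verifiable in $O(nm)$ time by checking the resource constraint~(\ref{eq_res}) and that each $x_i$ lies in one of its $m$ intervals. For NP-hardness the key observation is that the only combinatorial freedom surviving the symmetric structure is the choice of which interval each variable occupies, and that this freedom is enough. I would collapse every interval to a single common point by taking the middle intervals degenerate, $\tilde l_j=\tilde u_j=:p_j$, and forcing the first and last intervals to degenerate as well at points $p_1<\dots<p_m$; degenerate first and last intervals automatically satisfy the orderings demanded by F1 and L1, so the instance is legitimately of type (F1,L1). Taking $p_1=0$, a feasible allocation is then exactly a choice of multiplicities $n_j\ge 0$ with $\sum_j n_j=n$ and $\sum_j n_j p_j=R$, i.e.\ a representation of $R$ as a sum of at most $n$ values drawn from the denominations $p_2,\dots,p_m$. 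This is the bounded change-making decision problem, which is NP-complete. Because the correspondence is exact there are no spurious feasible allocations to rule out; the only caveat is that $m$ must grow with the input, which is consistent with the algorithm being polynomial for fixed $m$.

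For (F2,L2) I would prove the stronger statement that whenever $\sum_i l_{i,1}\le R\le\sum_i u_{i,m}$ a feasible solution exists, by showing the set of attainable values of $\sum_i x_i$ is the full interval $[\sum_i l_{i,1},\sum_i u_{i,m}]$ with no gaps. The defining inequalities of F2 and L2 state that the first and last interval of every variable are at least as long as the largest inter-interval gap $\max_j(\tilde l_{j+1}-\tilde u_j)$; hence any adjustment of the total by less than one gap can be absorbed by sliding a variable that currently sits inside its wide first or last interval. Sweeping the total upward from the all-minimum allocation, whenever the variable being raised reaches the top of an interval and the next feasible jump would overshoot, the residual (smaller than a gap) is taken up by such a wide interval; this rules out any gap and produces the witness greedily.

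For (F1,L2) and (F2,L1) only one end interval is wide, so feasibility is no longer unconditional, and I would show that in each stated extreme-$R$ regime the attainable sums again form a contiguous range. When $R$ is large (respectively small) the allocation is driven either toward the wide side of the instance, where the gap-filling argument of (F2,L2) applies, or toward a configuration in which every variable stays inside a single interval, whose Minkowski sum is automatically gap-free; in both situations a greedy sweep decides feasibility and returns a witness when one exists. The bounds $n\tilde u_1$ and $\tilde l_m+\sum_{i<n} l_{i,1}$, and symmetrically $\sum_{i<n} u_{i,m}+\tilde u_1$ and $n\tilde l_m$, delimit exactly these contiguous regimes. I expect the genuine difficulty of the lemma to concentrate here: pinning down these thresholds precisely and verifying that the greedy construction stays valid up to each boundary, the complementary middle range being precisely where the problem inherits the hardness of the (F1,L1) core.
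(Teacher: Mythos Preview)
Your NP-hardness argument differs from the paper's: the paper simply cites a known reduction from the even/odd partition problem, whereas you reduce from bounded change-making by collapsing all intervals to common points. Your reduction is valid but, as you note, requires $m$ to grow with the input; the paper's cited reduction does not. Both establish the claim, so this is a stylistic difference rather than a gap.

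For the three tractable cases your plan is sound but takes a longer route than the paper, and the order is inverted in a way that creates the very ``genuine difficulty'' you anticipate. The paper does \emph{not} first argue that the attainable sums form a contiguous interval and then sweep through it. Instead it writes down a single explicit allocation $y$ by greedily filling variables to their maximum $u_{i,m}$ in index order until $R$ is exhausted; by construction at most one coordinate $y_s$ lands strictly between two consecutive intervals. The repair is then a one-shot transfer: push $y_s$ up to the next lower endpoint $\tilde{l}_{j+1}$ and absorb the surplus by lowering $y_1$ (which sits at $u_{1,m}$), or push $y_s$ down to $\tilde{u}_j$ and absorb the deficit by raising $y_n$ (which sits at $l_{n,1}$). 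The L2 or F2 width hypothesis is used exactly once, to certify that the absorbing variable stays inside its extreme interval. This handles (F1,L2) and (F2,L1) directly; (F2,L2) then follows because its resource range is covered by the union of the two previous cases. So the paper proves the mixed cases first and derives (F2,L2) as a corollary, the reverse of your plan, and this ordering sidesteps the threshold-pinning work you flagged as the hard part.
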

\begin{proof}
As noted before, the problem of finding a feasible solution to the case (F1,L1) reduces from the even/odd partition problem, which is NP-complete \cite{vanderKlauw2017}. For the other three cases, we construct a feasible solution $y'$ to RAP-DIBC under the stated conditions for each case as follows.

We start with a specific initial solution $y$, to be defined below, that only satisfies the resource constraint (\ref{eq_res}) and the smallest lower and largest upper bound on each variable, i.e., that belongs to the set $S := \left\{ x \in \mathbb{R}^n \ \mid \ \sum_{i \in N} x_i = R, \ l_{i,1} \leq x_i \leq u_{i,m} \forall i \in N \right\}$. We construct $y$ via a sequential greedy procedure that, starting from the first variable $y_1$, assigns the maximum possible value to variables $y_i$ until the full resource value has been used:
\begin{align*}
y_1 & := \max_{x \in S} x_1 = \min (u_{1,m}, R); \\
y_i &:= \max_{x \in S, \  x_k = y_k \forall k < i } x_i
= \min \left( u_{i,m}, R - \sum_{k=1}^{i-1} u_{k,m} \right), \quad i > 1.
\end{align*}
Note that by construction at most one variable $y_s$ of $y$ is not in one of the disjoint intervals $[l_{s,j},u_{s,j}]$, namely the one whose index $s$ satisfies $\sum_{i=1}^{s-1} u_{i,m} + \sum_{i=s}^n l_{i,1} < R < \sum_{i=1}^{s} u_{i,m} + \sum_{i=s+1}^n l_{i,1}$. Let $j$ be the largest index in $M$ so that $u_{s,j} < y_s < l_{s, j+1}$. We consider each of the three remaining cases (F1,L2), (F2,L1), and (F2,L2) separately:
\begin{description}
\item[(F1,L2)] If $R \leq n \tilde{u}_1$, then the solution $y'$ given by $y'_t := l_{t,1} + (u_{t,1} - l_{t,1}) \frac{R - \sum_{i \in N} l_{i,1}}{\sum_{i \in N} (u_{i,1} - l_{i,1})}$ is feasible for (F1,L2) (and (F2,L2)). If $R \geq \tilde{l}_m + \sum_{i > 1} l_{i,1}$ and $y$ is infeasible, then $s > 1$. It follows that the solution $y'$ with $y'_s = y_s + (l_{s,j + 1} - y_s) = l_{s,j + 1}$, $y'_1 = y_1 - (l_{s,j + 1} - y_s) = u_{1,m} - (l_{s,j + 1} - y_s)$, and $y'_i = y_i$ for $i \not\in \lbrace 1,s \rbrace$ is feasible for (F1,L2) (and (F2,L2)) since $y'_1 \geq u_{1,m} - (l_{s,j+1} - u_{s,j}) \geq u_{1,m} - (u_{1,m} - l_{1,m}) = l_{1,m}$.

\item[(F2,L1)] If $R > n \tilde{l}_m$, then the solution $y'$ given by $y'_t := \tilde{m} + (u_{t,m} - l_{t,m}) \frac{R - \sum_{i \in N} l_{i,m}}{\sum_{i \in N} (u_{i,m} - l_{i,m})}$ is feasible for (F2,L1) (and (F2,L2)). If $R \leq \sum_{i < n} u_{i,m} + \tilde{u}_1$ and $y$ is infeasible, then $s < n$. It follows that the solution $y'$ with $y'_s = y_s - (y_s - u_{s,j}) = u_{s,j}$, $y'_n = y_n + (y_s - u_{s,j}) = l_{n,1} + (y_s - u_{s,j})$, and $y'_i = y_i$ for $i \not\in \lbrace s,n \rbrace$ is feasible for (F2,L1) (and (F2,L2)) since $y'_n \leq l_{n,1} + (l_{s,j+1} - u_{s,j}) \leq l_{n,1} + (u_{n,1} - l_{n,1}) = u_{n,1}$.

\item[(F2,L2)] Note that $\sum_{i \in N} l_{i,1} \leq R \leq \sum_{i \in N} u_{i,1}$ implies that $R \geq \tilde{l}_m + \sum_{i > 1} l_{i,1}$ or  $R < \tilde{l}_m + \sum_{i > 1} l_{i,1} \leq u_{1,m} + \sum_{1 < i < n} u_{i,m} + u_{n,1} = \sum_{i < n} u_{i,m} + \tilde{u}_1$. In the parts for (F1,L2) and (F2,L1), the existence of feasible solutions for (F2,L2) was shown for both these possibilities for values of $R$. Thus, a feasible solution to the case (F2,L2) can be found in polynomial time.
\end{description}
\end{proof}

Note that not all possible parameter settings are covered by Lemma~\ref{lemma_feasible}, meaning that for such cases checking the feasibility of RAP-DIBC might be NP-complete. However, we will show in Sections~\ref{sec_opt} and~\ref{sec_alg_init} that a given instance is infeasible if our eventual solution algorithm terminates without an optimal solution when applied to this instance. Thus, the worst-case time complexity of this Algorithm~\ref{alg_final}, $O \left( \binom{n + m - 2}{m - 2}  (n \log n + nF) \right)$ (see Theorem~\ref{th_complexity}), is also an upper bound on the time required to check feasibility, meaning that for fixed $m$ checking feasibility can be done in polynomial time.

\subsection{Existence of a monotone optimal solution}
\label{sec_opt}

We now focus on deriving the existence of a monotone optimal solution to RAP-DIBC with regard to the used intervals $M$. For a given feasible solution $x$ to RAP-DIBC and an index $i \in N$, let $j(x,i)$ denote the interval that contains $x_i$, i.e., $j(x_i) = j$ if and only if $l_{i,j} \leq x_i \leq u_{i,j}$. We show in Lemma~\ref{lemma_opt} that there exists an optimal solution $x^*$ for which the sequence $(j(x^*,i))_{i \in N}$ is non-decreasing.

\begin{lemma} 
For any feasible instance of RAP-DIBC, there exists an optimal solution $x^*$ so that $j(x^*,i) \leq j(x^*,j)$ whenever $i < j$.
\label{lemma_opt}
\end{lemma}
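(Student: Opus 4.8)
The plan is to fix an optimal solution and repeatedly repair violations of monotonicity by a local exchange between two adjacent variables, driven by a potential that forces termination. First I would note that an optimal solution exists: each $x_i$ ranges over the bounded set $\cup_{j \in M}[l_{i,j},u_{i,j}]$, so the feasible region is a closed and bounded (hence compact) subset of the hyperplane $\sum_{i} x_i = R$, and $\sum_{i} \phi(x_i+b_i)$ is continuous. Among all optimal solutions I would then pick one, $x^*$, that maximizes the (integer-valued, bounded) potential $P(x) := \sum_{i \in N} i \cdot j(x,i)$. If $(j(x^*,i))_{i}$ were not non-decreasing, there would be an index with an \emph{adjacent} inversion $p := j(x^*,i) > q := j(x^*,i+1)$. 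Since the intervals are disjoint and ordered and $p > q$, we have $p \ge 2$ and $q \le m-1$, so the relevant bounds are the standardized ones, and one checks $x_i^* \ge l_{i,p} = \tilde{l}_p > \tilde{u}_q = u_{i+1,q} \ge x_{i+1}^*$; moreover $b_i \ge b_{i+1}$ because $b$ is non-increasing.

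The exchange I would use swaps the interval \emph{assignments} of $i$ and $i+1$: I keep $x_i' + x_{i+1}' = x_i^* + x_{i+1}^* =: \sigma$ but place $x_i'$ in interval $q$ and $x_{i+1}'$ in interval $p$, choosing the values inside these target intervals. This preserves the multiset of used intervals, so $P$ strictly increases by $p-q > 0$; together with feasibility and the cost bound below this contradicts the maximality of $P$. That the exchange does not increase the cost follows from convexity: the single-variable restriction $g(x_i) := \phi(x_i+b_i) + \phi(\sigma - x_i + b_{i+1})$ is convex, and for $c > a$ the increment $t \mapsto \phi(c+t)-\phi(a+t)$ is non-decreasing, whence $g(x_{i+1}^*) \le g(x_i^*)$; by convexity any redistribution with $x_i' \in [x_{i+1}^*, x_i^*]$ then has cost at most $g(x_i^*)$, the original cost, so the modified solution is again optimal.

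The crux is feasibility of this exchange, and this is exactly where the four cases enter. For the standardized middle intervals ($2 \le q$ and $p \le m-1$) the plain value-swap $x_i' = x_{i+1}^*$, $x_{i+1}' = x_i^*$ already lands both values in valid intervals, since $[l_{i,q},u_{i,q}] = [\tilde{l}_q,\tilde{u}_q] = [l_{i+1,q},u_{i+1,q}]$ and likewise for $p$. The remaining sub-cases $q = 1$ and $p = m$, where the per-variable bounds differ, are handled by the structural conditions: under (F1) the chain $l_{i,1} \le l_{i+1,1} \le x_{i+1}^*$ makes the swapped value feasible for $i$, and symmetrically (L1) places the value for $i+1$ in the last interval; under the length conditions (F2) and (L2) a plain swap may leave the feasible region, so instead I would slide $x_i'$ (resp.\ $x_{i+1}'$) within the elongated first interval $[l_{i,1},\tilde{u}_1]$ (resp.\ last interval $[\tilde{l}_m,u_{i+1,m}]$), using that its length is at least the largest gap $\max_{j \in M \backslash \lbrace m \rbrace}(\tilde{l}_{j+1}-\tilde{u}_j)$ to absorb the discrepancy while keeping the partner feasible and $x_i'$ inside $[x_{i+1}^*, x_i^*]$, so that the cost bound still applies.

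I expect the main obstacle to be precisely this feasibility analysis at the extreme intervals, rather than the cost or termination arguments, which are uniform across all four cases. The difficulty is that the naive swap can push a variable into a gap between consecutive intervals, and one must verify—in each combination (F1,L1), (F1,L2), (F2,L1), (F2,L2)—that some redistribution of $\sigma$ is simultaneously monotone, feasible, and cost-non-increasing. The monotonicity conditions (F1)/(L1) yield this directly from the ordering of the extreme bounds, whereas the length conditions (F2)/(L2) yield it only after exhibiting an explicit value in the long interval; concretely, showing that the window $[l_{i,q},u_{i,q}] \cap [\sigma - u_{i+1,m},\, \sigma - \tilde{l}_m]$ (and its first-interval analogue) is non-empty under each condition is the technical heart of the argument.
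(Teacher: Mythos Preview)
Your framework—select an optimal solution maximizing the integer potential $P(x)=\sum_i i\cdot j(x,i)$, find an adjacent inversion, and exchange—is sound and in fact cleaner than the paper's vague ``applied inductively''. The cost bound via convexity of $g$ is correct, and under (F1)/(L1) the plain value swap works exactly as you and the paper argue.

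The gap is your proposed exchange under the length conditions. You assert that one can always place $x_i'$ in interval $q$ and $x_{i+1}'$ in interval $p$ simultaneously, i.e.\ that the window $[l_{i,q},u_{i,q}]\cap[\sigma-u_{i+1,m},\sigma-\tilde l_m]$ is non-empty under (L2). It need not be. Take $m=2$, variable~1 with region $\{0\}\cup[2,100]$, variable~2 with region $\{0\}\cup[2,4]$ (so (F1,L2) holds since $\min_i(u_{i,2}-\tilde l_2)=2$ equals the gap), $b\equiv 0$, $R=50$, and the feasible point $x^*=(50,0)$ with $p=2$, $q=1$. The window is $\{0\}\cap[46,48]=\emptyset$: no redistribution of $\sigma=50$ puts $x_1'$ in interval~1 and $x_2'$ in interval~2. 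The length condition (L2) bounds $u_{i+1,m}-\tilde l_m$ below by the maximum gap, but says nothing about how far $x_i^*\le u_{i,m}$ may exceed $u_{i+1,m}$; that excess is what breaks your swap. The symmetric issue arises under (F2) for $q=1$.

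The paper's proof avoids this by \emph{not} attempting a full interval swap. Under (L2), when $x_s^*>u_{t,m}$, it moves $x_t^*$ up by exactly one gap $\tilde l_{j'+1}-\tilde u_{j'}$ and uses (L2) to show $x_s^*$ minus that gap remains $\ge\tilde l_m$, so $x_s$ stays in interval~$m$; then it iterates. In the example this yields $(48,2)$, which happens to be monotone in one step, but in general $j(\cdot,t)$ only increases by one. This single-gap move is always feasible and still strictly increases your potential $P$ (by $i+1$), so your termination argument survives if you substitute it for the full swap in the (L2) case; the (F2) case needs separate care, since the analogous paper move decreases $j(\cdot,i)$ and hence $P$.
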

\begin{proof}
Let $x^*$ be any optimal solution for RAP-DIBC and suppose that there exist indices $s,t$ such that $s < t$ but $j(x^*,s) > j(x^*,t)$. Consider the solution $y(\varepsilon)$ with $y_s(\varepsilon) = x_s^* - \varepsilon$, $y_t(\varepsilon) = x_t^* + \varepsilon$, and $y_i(\varepsilon) = x_i^*$ for $i \not\in \lbrace s,t \rbrace$. Since $x_s^* + b_s >  x_t^* + b_t$, we have for $\varepsilon \in (0, x^*_s + b_s - x^*_t - b_t)$ by convexity of $\phi$ that $\phi(y_s(\varepsilon) + b_s) + \phi(y_t(\varepsilon) + b_t) \leq \phi(x_s^* + b_s) + \phi(x_t^* + b_t)$ and thus $\sum_{i \in N} \phi(y_i(\varepsilon) + b_i) < \sum_{i \in N} \phi(x^*_i + b_i)$. This means that for such $\varepsilon$, the solution $y(\varepsilon)$ is optimal if it is feasible. The argument in this paragraph can then be applied inductively to $y(\varepsilon)$ to arrive at the existence of an optimal solution to RAP-DIBC that satisfies the result of the lemma.

It remains to be shown that $y(\varepsilon)$ is feasible for some $\varepsilon \in (0, x^*_s + b_s - x^*_t - b_t)$. We distinguish between the following cases:
\begin{enumerate}
\item
When $l_{t,1} \leq x^*_s \leq u_{t,m}$ and $l_{s,1} \leq x^*_t \leq u_{s,m}$, feasibility is achieved for $\epsilon = x^*_s - x^*_t$, i.e., by interchanging the variable values between $s$ and $t$.
\item
If $x^*_s < l_{t,1}$, then by assumption we have $x^*_s < l_{t,1} \leq x^*_t < x^*_s$, which is a contradiction. Thus, this case cannot occur. Similarly, the case $x^*_t > u_{s,m}$ cannot occur.
\item
The case $x^*_s > u_{t,m}$ can occur only in the special cases (F1,L2) and (F2,L2), meaning that we may assume that $\min_{i \in N} (u_{i,m} - l_{i,m}) \geq \max_{j \in M \backslash \lbrace m \rbrace} (\tilde{l}_{j+1} - \tilde{u}_j)$. We now consider two cases. If $x^*_t \neq \tilde{u}_{j'}$ for any $j' \in M \backslash \lbrace m \rbrace$, then a sufficiently small $\varepsilon$ suffices. Otherwise, if $x^*_t = \tilde{u}_{j'}$ for some $j' \in M \backslash \lbrace m \rbrace$, the choice $\varepsilon = \varepsilon' := \tilde{l}_{j'+1} - \tilde{u}_{j'}$ suffices since $0 < \varepsilon' \leq u_{t,m} - x^*_t < x^*_s - x^*_t$ and we have $y_t( \varepsilon') = x^*_t +  \tilde{l}_{j'+1} - \tilde{u}_{j'} = \tilde{l}_{j'+1}$ and
\begin{align*}
y_s ( \varepsilon') &= x^*_s - ( \tilde{l}_{j'+1} - \tilde{u}_{j'}) > u_{t,m}  - \max_{j \in M \backslash \lbrace m \rbrace} (\tilde{l}_{j+1} - \tilde{u}_{j}) \geq u_{t,m} - \min_{i \in N} (u_{i,m} - l_{i,m}) \\
&  \geq u_{t,m} - (u_{t,m} - l_{t,m}) = l_{t,m} = \tilde{l}_m,
\end{align*}
meaning that $y(\varepsilon')$ is feasible.

\item
In the final case $x^*_t < l_{s,1}$, feasibility is achieved analogously to the case $x^*_s > u_{t,m}$ by noting that this case occurs only in the special cases (F2,L1) and (F2,L2).
\end{enumerate}
We conclude that there always exists $\varepsilon \in (0, x^*_s + b_s - x^*_t - b_t)$ such that $y(\varepsilon)$ is feasible, which completes the proof of the lemma.
\end{proof}

\section{An initial algorithm}
\label{sec_alg_init}

Based on the existence of a monotone optimal solution to RAP-DIBC as proven in Lemma~\ref{lemma_opt} in the previous section, we present in this section an initial algorithm for solving RAP-DIBC. Lemma~\ref{lemma_opt} implies that there exists an optimal solution $x^*$ and a vector $K^* \in \mathbb{Z}^{m-1}$ with $0 \leq K^*_1 \leq \ldots \leq K^*_{m-1} \leq n$ such that
\begin{align*}
i \leq K^*_1 & \Leftrightarrow x^*_i \in [l_{i,1}, \tilde{u}_1], \\
K^*_{j-1} < i \leq K^*_{j} & \Leftrightarrow x^*_i \in [\tilde{l}_j, \tilde{u}_j], \quad j \in M \backslash \lbrace 1, m \rbrace; \\
K^*_{m-1} < i & \Leftrightarrow x^*_i \in [\tilde{l}_m , u_{i,m}].
\end{align*}
We call $K^*$ an optimal \emph{partition vector} and denote by $\mathcal{K}$ the collection of all valid, i.e., non-decreasing partition vectors, meaning that $\mathcal{K} := \lbrace K \in \mathbb{Z}^{m-1} \ | \ 0 \leq K_1 \leq \ldots \leq K_{m-1} \leq n \rbrace$. For each $K \in \mathcal{K}$ and $i \in N$, we define $j^K(i)$ as the index $j \in M$ such that $K_{j - 1} < i \leq K_{j}$. For each $K \in \mathcal{K}$, we define $P(\phi,K)$ as the restriction of the original problem RAP-DIBC wherein each variable must lie in the interval that is specified by the partition induced by $K$:
\begin{align*}
P(\phi,K) \colon \ \min_{x \in \mathbb{R}^n} \ & \sum_{i \in N} \phi (x_i+b_i) \\
\text{s.t. } & \sum_{i \in N} x_i = R; \\
& x_i \in [l_{i,j^K(i)}, u_{i,j^K(i)}], \quad i \in N.
\end{align*}

The existence of $K^*$ leads to the following general approach to solve RAP-DIBC. For each partition vector $K \in \mathcal{K}$, we compute an optimal solution to $P(\phi,K)$ and record the corresponding optimal objective value $V^K$. Subsequently, we select the partition vector for which $V^{K}$ is the smallest and retrieve the corresponding optimal solution. The question that remains is how to solve $P(\phi,K)$ efficiently for a given $K$. For this, we first explicitly define a special case of $P(\phi,K)$ where $\phi$ is the quadratic function $\phi(x_i + b_i) := \frac{1}{2}(x_i + b_i)^2$:
\begin{align}
Q(K) \colon \ \min_{x \in \mathbb{R}^n} \ & \sum_{i \in N} \frac{1}{2} (x_i + b_i)^2 \nonumber \\
\text{s.t. } & \sum_{i \in N} x_i = R; \label{eq_res_constr} \\
& x_i \in [l_{i,j^K(i)}, u_{i,j^K(i)}], \quad i \in N. \nonumber
\end{align}
Note that both $Q(K)$ and $P(\phi,K)$ are simple RAPs with the same feasible region and shift parameter $b$. As a consequence, we may apply a reduction result from \cite{SchootUiterkamp2022}, which states that optimal solutions to $Q(K)$ are also optimal for $P(\phi,K)$:
\begin{lemma}[Condition 1 and Theorem 1 in \cite{SchootUiterkamp2022}]
Given $K \in \mathcal{K}$ and a continuous convex function $\phi$ so that $P(\phi,K)$ (and thus also $Q(K)$) is feasible, any optimal solution to $Q(K)$ is also optimal for $P(\phi,K)$.
\label{lemma_reduction}
\end{lemma}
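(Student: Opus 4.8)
Since the statement is quoted verbatim from \cite{SchootUiterkamp2022}, the quickest route is to invoke that reference directly; nevertheless, let me describe a self-contained argument based on optimality conditions. The plan is to exploit that $Q(K)$ and $P(\phi,K)$ are convex separable minimization problems over the \emph{same} box $\prod_{i \in N} [l_{i,j^K(i)}, u_{i,j^K(i)}]$ subject to the \emph{same} resource constraint $\sum_{i \in N} x_i = R$, and that they share the shift vector $b$. For such problems the Karush--Kuhn--Tucker (KKT) conditions are necessary \emph{and} sufficient for global optimality. Hence it suffices, given an optimal solution $x^Q$ of $Q(K)$ (which exists and is unique by feasibility and strict convexity), to exhibit a multiplier for the resource constraint of $P(\phi,K)$ that certifies the optimality of $x^Q$.

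First I would record the threshold structure of $x^Q$. Writing $\lambda^Q$ for the optimal resource multiplier of $Q(K)$, the coordinatewise KKT conditions for the quadratic objective $\tfrac12(x_i+b_i)^2$ reduce to the waterfilling form $x_i^Q + b_i = \mathrm{median}(l_{i,j^K(i)} + b_i,\ \lambda^Q,\ u_{i,j^K(i)} + b_i)$. In particular, every index $i$ whose value is strictly interior to its interval satisfies $x_i^Q + b_i = \lambda^Q$; every lower-pinned index satisfies $l_{i,j^K(i)} + b_i \geq \lambda^Q$; and every upper-pinned index satisfies $u_{i,j^K(i)} + b_i \leq \lambda^Q$.

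Next I would verify that the same point $x^Q$ is KKT-optimal for $P(\phi,K)$. The KKT system there asks for a single multiplier $\mu$ such that $x^Q$ minimizes the separable Lagrangian $\sum_{i \in N} [\phi(x_i+b_i) - \mu x_i]$ over the box; coordinatewise this means $\mu \in \partial\phi(x_i^Q+b_i)$ for interior indices, $\phi'_+(l_{i,j^K(i)}+b_i) \geq \mu$ for lower-pinned indices, and $\phi'_-(u_{i,j^K(i)}+b_i) \leq \mu$ for upper-pinned indices. I would then take $\mu$ to be any element of $\partial\phi(\lambda^Q)$, which is nonempty since $\phi \colon \mathbb{R} \to \mathbb{R}$ is finite and convex, so $\mu \in [\phi'_-(\lambda^Q), \phi'_+(\lambda^Q)]$. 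Checking each case against the threshold description: interior indices have $x_i^Q+b_i = \lambda^Q$, so $\mu \in \partial\phi(x_i^Q+b_i)$ holds by construction; for a lower-pinned index, monotonicity of one-sided derivatives gives $\phi'_+(l_{i,j^K(i)}+b_i) \geq \phi'_+(\lambda^Q) \geq \mu$; and symmetrically $\phi'_-(u_{i,j^K(i)}+b_i) \leq \phi'_-(\lambda^Q) \leq \mu$ for an upper-pinned index. Thus $x^Q$ together with the single multiplier $\mu$ satisfies the KKT system of $P(\phi,K)$, and by sufficiency $x^Q$ is optimal for $P(\phi,K)$.

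The bookkeeping is routine; the one point needing genuine care is the non-smoothness of $\phi$, which is assumed only continuous and convex. I would therefore phrase all optimality conditions via the subdifferential (equivalently, the left and right derivatives $\phi'_-,\phi'_+$), and the crux is the monotonicity inequality $\phi'_-(s) \leq \phi'_+(s) \leq \phi'_-(t)$ valid for $s < t$. This is precisely what allows the single choice $\mu \in \partial\phi(\lambda^Q)$ to satisfy the lower-bound and upper-bound conditions simultaneously, and it is the only place where convexity of $\phi$ is used. This monotonicity is the heart of the reduction property and the main obstacle to watch in the formal write-up.
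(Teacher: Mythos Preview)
Your proposal is correct. Note that the paper does not actually prove this lemma: it is stated as a direct citation of Condition~1 and Theorem~1 from \cite{SchootUiterkamp2022}, with no argument given in the present paper. You acknowledge this and then supply a self-contained KKT argument, which is more than the paper does.

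The comparison is therefore ``citation versus self-contained proof.'' Your route---reading off the waterfilling structure of the unique quadratic optimum $x^Q$ via its multiplier $\lambda^Q$, then certifying optimality of the same point for $P(\phi,K)$ by picking any $\mu \in \partial\phi(\lambda^Q)$ and checking the three box cases using monotonicity of one-sided derivatives---is the standard and clean way to establish this reduction directly. Two small remarks for the write-up: (i) you only need KKT \emph{sufficiency} for $P(\phi,K)$, which holds for any convex $\phi$ without a constraint qualification, and KKT \emph{necessity} only for the smooth quadratic $Q(K)$ with purely affine constraints, where it is automatic; (ii) uniqueness of $x^Q$ (by strict convexity) makes the phrase ``any optimal solution to $Q(K)$'' vacuously about a single point, so there is no further case analysis. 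The identification of the subgradient monotonicity inequality as the sole place convexity of $\phi$ enters is exactly right.
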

Lemma~\ref{lemma_reduction} implies that solving $P(\phi,K)$ reduces to solving $Q(K)$, which is signifcantly simpler. Many different approaches and algorithms to solve $Q(K)$ exist \cite{Patriksson2008}, of which the most efficient ones have an $O(n)$ worst-case time complexity \cite{Brucker1984, Kiwiel2008}.

Algorithm~\ref{alg_init} summarizes the sketched approach. The worst-case time complexity of this algorithm is established as follows. The number of valid partitions is equal to the number of ways that $m-1$ items can be divided over $n+1$ bins, which is $\left( \begin{array}{c} n + m - 1 \\ m-1 \end{array} \right)$. Moreover, as stated before, each instance of $Q(K)$ can be solved in $O(n)$ time \cite{Brucker1984}. Assuming that one evaluation of $\phi$ takes $F$ flops, we conclude that the worst-case time complexity of Algorithm~\ref{alg_init} is $O\left( \left( \begin{array}{c} n + m - 1 \\ m-1 \end{array} \right) (n + nF) \right)$.

\begin{algorithm}
\caption{Ennumerative algorithm for RAP-DIBC.}
\label{alg_init}
\begin{algorithmic}[5]
\STATE{\textbf{Input:} Continuous convex function $\phi$, resource value $R$,  feasible regions $\cup_{j \in M} [l_{i,j},u_{i,j}]$ for each $i \in N$ satisfying (F1,L1), (F1,L2), (F2,L1), or (F2,L2)}
\STATE{\textbf{Output:} Optimal solution $x^*$ for RAP-DIBC}
\STATE{Establish set of valid partitions: $\mathcal{K} := \lbrace K \in \mathbb{Z}^{m-1} \ | \ 0 \leq K_1 \leq \ldots \leq K_{m-1} \leq n \rbrace$}
\FOR{$K \in \mathcal{K}$}
\IF{$\sum_{i \in N} l_{i,j^K(i)} > R$ or $\sum_{i \in N} u_{i,j^K(i)} < R$}
\STATE{$Q(K)$ has no feasible solution; set $V^K := \infty$}
\ELSE
\STATE{Compute optimal solution $x^K$ to $Q(K)$ and evaluate the optimal objective value for $P(\phi,K)$: $V^K := \sum_{i \in N} \phi(x^K_i + b_i)$}
\ENDIF
\ENDFOR
\IF{$\min_{K \in \mathcal{K}} V^K = \infty$}
\RETURN{Instance is infeasible}
\ELSE
\STATE{Select optimal partition vector $K^* := \arg\min_{K \in \mathcal{K}}  V^K$ and compute optimal solution $x^* := x^K$}
\RETURN{$x^*$}
\ENDIF
\end{algorithmic}
\end{algorithm}

We conclude this section with three remarks. First, in practice, one may consider to use alternative subroutines for solving the simple RAP subproblems that do not achieve the best known worst-case time complexity of $O(n)$. This is because the linear time complexity of the algorithms in, e.g., \cite{Brucker1984, Kiwiel2008} is achieved by using linear-time algorithms for finding medians, which are relatively slow in practice (see also the discussion in Section 4.1 of \cite{SchootUiterkamp2022}). As a consequence, alternative methods are often faster in practice and simpler to implement (e.g., the sequential breakpoint search approach described in Section~\ref{sec_QK}).

Second, the approach and algorithm in this section can be generalized to the case where the objective function of RAP-DIBC is $\Phi(x + b)$ for some Schur-convex function $\Phi \colon \mathbb{R}^n \rightarrow \mathbb{R}$ (see, e.g., \cite{Marshall2011} for more background on such functions). This is because the results of both Lemmas~\ref{lemma_opt} and~\ref{lemma_reduction} also hold for this more general case:
\begin{itemize}
\item
In Lemma~\ref{lemma_opt}, the necessary fact that $\Phi(y(\varepsilon) + b) \leq \Phi(x^* + b)$ for $\varepsilon \in (0, x^*_s + b_s - x^*_t - b_t)$ follows directly from the characterization of Schur-convex functions in, e.g., Lemma~3.A.2 of \cite{Marshall2011};
\item
Lemma~\ref{lemma_reduction} can be extended from continuous convex functions to Schur-convex functions (see, e.g., Theorem~5 of \cite{SchootUiterkamp2023}).
\end{itemize}
As a consequence, the only necessary adaption to Algorithm~\ref{alg_init} is in Line~8, where now the objective value of $x^K$ must be calculated as $V^K := \Phi(x^K + b)$. Assuming that this calculation takes $\tilde{F}$ flops, the worst-case time complexity of the algorithm becomes $O\left( \left( \begin{array}{c} n + m - 1 \\ m-1 \end{array} \right) (n + \tilde{F}) \right)$.

Finally, regarding the reduction result of Lemma~\ref{lemma_reduction}, one may ask whether a similar result holds for RAP-DIBC itself. More precisely, is it true that any optimal solution to a given instance of RAP-DIBC with quadratic objective is also optimal for that instance for any choice of continuous convex function $\phi$? If this were true, it is not necessary to record for each partition vector the corresponding optimal objective value for $P(\phi,K)$ and, instead, it would suffice to compare only the objective values for $Q(K)$. Unfortunately, Lemma~\ref{lemma_reduction} cannot be extended to RAP-DIBC, as demonstrated by the following counterexample. Given $n$ and a value $L > 0$, consider an instance of RAP-DIBC with $m=2$ and the following parameter choices:
\begin{enumerate}
\item $l_{i,1} = u_{i,1} = 0$, $l_{i,2} = L$ and $u_{i,2} = nL$ for all $i \in N$;
\item $R = nL$, $b_1 = - \frac{1}{2} \frac{nL}{n-1} - \varepsilon$ for some $\varepsilon \in \left(0,\frac{L}{n-1}(\frac{1}{2} n -1) \right)$, and $b_i = -\frac{nL}{n-1}$ for all $i > 1$.
\end{enumerate}
Consider the $n$ possible partitions $K^{i} = (i-1)$ for $i \in N$. For each $i \in N$, the (unique) optimal solution to $Q(K^i)$ is to divide the resource value $R = nL$ equally over all the variables $x_i, \ldots, x_n$ that are not fixed to $0$, i.e., the optimal solution is $x^i := \left(\underbrace{ 0,\ldots,0}_{i-1},\underbrace{  \frac{nL}{n-i+1},\ldots,  \frac{nL}{n-i+1}}_{n-i+1} \right)$. The corresponding objective value of $x^1$ is 
\begin{align*}
V_Q^1 & := \left(\frac{nL}{n} - \frac{1}{2} \frac{nL}{n-1} - \varepsilon \right)^2 + (n-1) \left(\frac{nL}{n} - \frac{nL}{n-1} \right)^2 \\
&= L^2 + 2L \left(- \frac{1}{2} \frac{nL}{n-1} - \varepsilon \right) +  \left( - \frac{1}{2} \frac{nL}{n-1} - \varepsilon \right)^2
+ (n-1) \left(\frac{-L}{(n-1)} \right)^2 \\
&= L^2 - \frac{nL^2}{n-1} - 2L \varepsilon +  \left( - \frac{1}{2} \frac{nL}{n-1} - \varepsilon \right)^2 + \frac{L^2}{n-1} \\
&=- 2L \varepsilon +  \left( - \frac{1}{2} \frac{nL}{n-1} - \varepsilon \right)^2
\end{align*}
 and that of $x^i$ for $i \in N \backslash \lbrace 1 \rbrace$ is 
\begin{align*}
V_Q^i  &:= \left( - \frac{1}{2} \frac{nL}{n-1} - \varepsilon \right)^2
+ (i-2) \left(-\frac{nL}{n-1} \right)^2
+ (n-i+1) \left(\frac{nL}{n-i+1} - \frac{nL}{n-1} \right)^2.
\end{align*}
Note that $V_Q^2 = \left( - \frac{1}{2} \frac{nL}{n-1} - \varepsilon \right)^2 < V_Q^i$ for all $i >2$. Moreover, $V_Q^1 = - 2L \varepsilon + V_Q^2 < V_Q^2$. It follows that $V_Q^1 < V_Q^i$ for all $i > 1$. Thus, $x^1$ is the unique optimal solution to RAP-DIBC when choosing $\phi(x_i + b_i) = (x_i + b_i)^2$. However, when choosing $\phi(x_i + b_i) = \max(0, x_i + b_i)$, the objective value of $x^1$ is $\max(0, L - \frac{1}{2} \frac{nL}{n-1} - \varepsilon) = \max(0, \frac{L}{n-1}(\frac{1}{2} n -1) - \varepsilon)$ and that of $x^2$ is $\max(0,- \frac{1}{2} \frac{nL}{n-1} - \varepsilon) = 0$. This means that the objective value of $x^2$ is smaller than that of $x^1$ for $\varepsilon < \frac{L}{n-1}(\frac{1}{2} n -1)$ and thus in that case $x^1$ is not optimal. This negative result also partially answers an open question posed in earlier work \cite{SchootUiterkamp2023} that asks if and how the reduction result in Lemma~\ref{lemma_reduction} can be extended to optimization problems with non-convex feasible regions, in particular to extensions of the simple RAP. The constructed counterexample is an instance of the RAP with semi-continuous variables, meaning that already for this simpler extension the reduction result does not apply anymore.

\section{An improved algorithm}
\label{sec_alg_final}

In this section, we present a different algorithm for RAP-DIBC whose computational complexity improves upon that of Algorithm~\ref{alg_init}. The computational gain compared to Algorithm~\ref{alg_init} is obtained by solving multiple RAP subproblems simultaneously with the same efficiency as solving a single subproblem. For this, we first describe in Section~\ref{sec_QK} an efficient approach for solving single instances of $Q(K)$. Second, in Section~\ref{sec_multiple}, we explain how this approach can be implemented to solve multiple instances of $Q(K)$ with the same worst-case time complexity as solving a single instance.

\subsection{A sequential breakpoint search algorithm for $Q(K)$}
\label{sec_QK}

We first explain how an optimal solution to $Q(K)$ can be found. Here, we closely follow and adjust the approach descried in Section~2 of \cite{SchootUiterkamp2021}. We consider the following Lagrangian relaxation of $Q(K)$:
\begin{align*}
Q(K, \lambda) \colon \ \min_{x \in \mathbb{R}^n} \ & \sum_{i \in N} \frac{1}{2} (x_i + b_i)^2 - \lambda \left(\sum_{i \in N} x_i - R \right) \\
\text{s.t. } & x_i \in [l_{i,j^K(i)}, u_{i,j^K(i)}], \quad i \in N,
\end{align*}
where $\lambda$ is the Lagrange multiplier corresponding to the resource constraint (\ref{eq_res_constr}). Given $\lambda$, the optimal solution to $Q(K,\lambda)$ is given by
\begin{equation}
x_i(K,\lambda) := \begin{cases}
l_{i,j^K(i)} & \text{if } \lambda \leq l_{i,j^K(i)} + b_i; \\
\lambda - b_i & \text{if } l_{i,j^K(i)} + b_i \leq \lambda \leq u_{i,j^K(i)} + b_i; \\
u_{i,j^K(i)} & \text{if } \lambda \geq u_{i,j^K(i)} + b_i.
\end{cases}
\label{eq_x}
\end{equation}
Note that each $x_i(K,\lambda)$ is a continuous, non-decreasing, and piecewise linear function of $\lambda$ with two breakpoints. We denote these breakpoints by $\alpha_i := l_{i,j^K(i)} + b_i$ and $\beta_i := u_{i,j^K(i)} + b_i$ and introduce the breakpoint multisets $\mathcal{A} := \lbrace \alpha_i \ | \ i \in N \rbrace$ and $\mathcal{B} := \lbrace \beta_i \ | \ i \in N \rbrace$. Furthermore, we denote the sum of the variables $x_i(K,\lambda)$ by $z(K,\lambda) := \sum_{i \in N} x_i(K,\lambda)$. Note that also $z(K,\lambda)$ is continuous, non-decreasing, and piecewise linear in $\lambda$ and that its breakpoints are those in $\mathcal{A} \cup \mathcal{B}$. 

Since $Q(K)$ is a convex optimization problem, there exists $\lambda^K$ such that $x(K,\lambda^K)$ is feasible for $Q(K)$, i.e., $z(K,\lambda^K) = R$, and thereby also optimal for $Q(K)$ (see, e.g., \cite{Boyd2004}). The goal is to find this $\lambda^K$ and reconstruct the corresponding optimal solution $x^K$ using (\ref{eq_x}). Note that, in general, $\lambda^K$ is not unique: in that case, the approach in this section finds the \emph{smallest} $\lambda^K$ that satisfies $z(\lambda^K) = R$.

To find $\lambda^K$, we first search for two consecutive breakpoints in $\mathcal{A} \cup \mathcal{B}$, say $\delta_1$ and $\delta_2$, such that $\delta_1 \leq \lambda^K < \delta_2$. Since $z(K,\lambda)$ is non-decreasing, this is equivalent to finding consecutive breakpoints $\delta_1$ and $\delta_2$ such that $z(K,\delta_1) \leq R < z(K,\delta_2)$. Most breakpoint search approaches in the literature propose to find $\delta_1$ and $\delta_2$ using a binary search on the breakpoints. However, we choose to employ a sequential search here, meaning that we consider the breakpoints in non-decreasing order until we have found the smallest breakpoint $\delta'$ with $z(K,\delta') \leq R$. Moreover, we use the following bookkeeping parameters to efficiently compute $z(K,\lambda)$:
\begin{equation*}
B(\lambda):= \sum_{i: \ \lambda < \alpha_i} l_{i,j^K(i)} + \sum_{i: \ \lambda > \beta_i} u_{i,j^K(i)};
\quad
F(\lambda) := \sum_{i: \ \alpha_i \leq \lambda \leq \beta_i} b_i;
\quad
N_F(\lambda) := | \lbrace i: \ \alpha_i \leq \lambda \leq \beta_i \rbrace |.
\end{equation*}
Each time a new breakpoint has been considered, we update these parameters according to Table~\ref{tab_BP}.

\begin{table}[ht!]
\centering
\begin{tabular}{r | l l l}
\toprule
Type of $\lambda$ & Update $B(\lambda)$ & Update $F(\lambda)$ & Update $N_F(\lambda)$ \\
\midrule
$\lambda \equiv \alpha_i$ & $B(\lambda) - l_{i,j^K(i)}$ & $F(\lambda) + b_i$ & $N_F(\lambda) + 1$ \\
$\lambda \equiv \beta_i$ & $B(\lambda) + u_{i,j^K(i)}$ & $F(\lambda) - b_i$ & $N_F(\lambda) - 1$ \\
\bottomrule
\end{tabular}
\caption{Updating the bookkeeping parameters throughout the sequential breakpoint search.}
\label{tab_BP}
\end{table}

Secondly, given $\delta_1$ and $\delta_2$, we find $\lambda^K$ as follows. If $z(K,\delta_1) = R$, then $\lambda^K = \delta_1$ and we are done. Otherwise, we know that $\lambda^K$ is not a breakpoint and therefore, by the monotonicity of $x(K,\lambda)$, that $x_i(K,\lambda^K) = l_{i,j^K(i)}$ if and only if $x_i(K,\delta_2) = l_{i,j^K(i)}$ and that $x_i(K,\lambda^K) = u_{i,j^K(i)}$ if and only if $x_i(K,\delta_1) = u_{i,j^K(i)}$. We can thus directly compute $B(\lambda^K)$, $F(\lambda^K)$, and $N_F(\lambda^K)$ as $B(\delta_1)$, $F(\delta_1)$, and $N_F(\delta_1)$, respectively. Note, that
\begin{align*}
R &= z(K,\lambda^K) \\
&= \sum_{i: \ x_i(K,\lambda^K) = l_{i,j^K(i)}} l_{i,j^K(i)}
+  \sum_{i: \ l_{i,j^K(i)} < x_i(K,\lambda^K) < u_{i,j^K(i)}} (\lambda^K - b_i)
+ \sum_{i: \ x_i(K,\lambda^K) = u_{i,j^K(i)}} u_{i,j^K(i)} \\
&=  B(\lambda^K) + N_F(\lambda^K) \lambda^K - F(\lambda^K)
\end{align*}
and thus we have
\begin{equation*}
\lambda^K = \frac{R -  B(\lambda^K)
+ F(\lambda^K) }{ N_F(\lambda^K)}.
\end{equation*}

Algorithm~\ref{alg_Q} summarizes the sketched approach. The breakpoint multisets $\mathcal{A}$ and $\mathcal{B}$ can be stored as sorted lists, meaning that computing the smallest breakpoint $\lambda_i$ in Line~6 takes $O(1)$ time. Thus, each iteration of the algorithm takes $O(1)$ time. This means that the entire breakpoint search procedure in Lines~5-22 takes at most $O(n)$ time since in the worst case all $2n$ breakpoint values in $\mathcal{A} \cup \mathcal{B}$ must be considered. Thus, the overall complexity of Algorithm~\ref{alg_Q} is $O(n \log n)$ due to the initial sorting of $\mathcal{A}$ and $\mathcal{B}$.

\begin{algorithm}[ht!]
\caption{An $O(n \log n)$ breakpoint search algorithm for $Q(K)$.}
\label{alg_Q}
\begin{algorithmic}[5]
\STATE{\textbf{Input:} Partition vector $K \in \mathcal{K}$, parameters $b \in \mathbb{R}^n$, $l_{i,j^K(i)}, u_{i,j^K(i)}$ for each $i \in N$, resource value $R$}
\STATE{\textbf{Output:} Optimal solution $x^K$ to $Q(K)$ and corresponding optimal Lagrange multiplier $\lambda^K$}
\STATE{Compute the breakpoint multisets $\mathcal{A} := \lbrace \alpha_i \ | \ i \in N \rbrace$ and $\mathcal{B} := \lbrace \beta_i \ | \ i \in N \rbrace$}
\STATE{Initialize $B := \sum_{i \in N} l_{i,j^K(i)}$, $F := 0$, and $N_F : = 0$}
\REPEAT
\STATE{Determine smallest breakpoint $\lambda_i := \min(\mathcal{A} \cup \mathcal{B})$}
\IF{$B + N_F \lambda_i - F = R$}
\STATE{$\lambda^K = \lambda_i$; compute $x^K$ as $x(K,\lambda_i)$ using (\ref{eq_x})}
\RETURN{$x^K, \lambda^K$}
\ELSIF{$B + N_F \lambda_i - F > R$}
\STATE{$\lambda^K = \frac{R - B + F}{N_F}$; compute $x^K$ as $x(K,\lambda^K)$ using (\ref{eq_x})}
\RETURN{$x^K, \lambda^K$}
\ELSE
\IF{$\lambda_i$ is lower breakpoint $(\lambda_i = \alpha_i)$}
\STATE{$B := B - l_{i,j^K(i)}$; $F := F + b_i$; $N_F := N_F + 1$}
\STATE{$\mathcal{A} := \mathcal{A} \backslash \lbrace \alpha_i \rbrace$}
\ELSE
\STATE{$B := B + u_{i,j^K(i)}$; $F := F - b_i$; $N_F := N_F - 1$}
\STATE{$\mathcal{B} := \mathcal{B} \backslash \lbrace \beta_i \rbrace$}
\ENDIF
\ENDIF
\UNTIL{multiplier $\lambda^K$ has been found}
\end{algorithmic}
\end{algorithm}

We conclude this section with a remark that is relevant for the next section. From Lemma~\ref{lemma_reduction}, we know that the output $x^K$ of Algorithm~\ref{alg_Q} is also optimal for $P(\phi,K)$ for any choice of continuous convex function $\phi$. To obtain the objective value for $P(\phi,K)$, we could simply evaluate the objective function of $P(\phi,K)$ for $x^K$. However, we can also adjust Algorithm~\ref{alg_Q} slightly so that this objective value can be computed without explicitly computing $x^K$. To this end, we introduce also the following bookkeeping parameter:
\begin{equation*}
V_B (\lambda) :=  \sum_{i: \ \lambda < \alpha_i} \phi(l_{i,j^K(i)} + b_i) + \sum_{i: \ \lambda > \beta_i} \phi(u_{i,j^K(i)} + b_i).
\end{equation*}
Given $\lambda^K$, the optimal objective value of $x^K$ equals $V_B(\lambda^K) + N_F(\lambda^K) \phi(\lambda^K)$. Analogously to $B(\lambda)$, the parameter $V_B(\lambda)$ is also updated whenever a new breakpoint is considered. More precisely, if this is a lower breakpoint, say $\alpha_i$, then we update $V_B(\lambda)$ to $V_B(\lambda) - \phi(l_{i,j^K(i)} + b_i)$. On the other hand, if it is an upper breakpoint $\beta_i$, we update $V_B(\lambda)$ to $V_B(\lambda) + \phi(u_{i,j^K(i)} + b_i)$. Including this feature in the algorithm changes its worst-case time complexity from $O(n \log n)$ to $O(n \log n + nF)$.

\subsection{Solving multiple subproblems in one run}
\label{sec_multiple}

In this subsection, we describe how Algorithm~\ref{alg_Q} can be adopted to solve $Q(K)$ for a particular sequence of partition vectors while maintaining the original $O(n \log n)$ time complexity. For this, we define for a given $K \in \mathcal{K}$ the partition vector $K^+$ obtained from $K$ by increasing $K_{m-1}$ by $1$, i.e., 
\begin{equation}
K^+ := (K_1, \ldots, K_{m-2}, K_{m-1} + 1 ).
\label{eq_increase}
\end{equation}
The crucial ingredient for our approach is Lemma~\ref{lemma_lambda}, which demonstrates a monotone relationship between the optimal Lagrange multipliers $\lambda^{K^+}$ and $\lambda^K$:
\begin{lemma}
Let a partition vector $K \in \mathcal{K}$ be given and let $K^+$ be the partition vector as defined in (\ref{eq_increase}). Then $\lambda^{K^+} > \lambda^{K}$.
\label{lemma_lambda}
\end{lemma}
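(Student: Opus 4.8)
The plan is to reduce the claim to a pointwise comparison of the aggregate functions $z(K,\cdot)$ and $z(K^+,\cdot)$ and then to invoke their monotonicity together with the defining property that $\lambda^K$ and $\lambda^{K^+}$ are the \emph{smallest} multipliers attaining the value $R$. Throughout I assume that both $Q(K)$ and $Q(K^+)$ are feasible, so that both multipliers exist.

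First I would identify precisely how $K$ and $K^+$ differ. Since $K^+$ only increments the last entry $K_{m-1}$ by one, the induced interval assignment $j^{K^+}(\cdot)$ coincides with $j^K(\cdot)$ on every index except $i^\star := K_{m-1}+1$, whose assigned interval drops from $j^K(i^\star)=m$ to $j^{K^+}(i^\star)=m-1$. Hence the feasible interval of $x_{i^\star}$ switches from the last interval $[\tilde l_m,u_{i^\star,m}]$ to the interval with index $m-1$, whose upper endpoint is $\tilde u_{m-1}$, while for every other index the feasible interval is unchanged. By disjointness and ordering of the intervals we have $\tilde u_{m-1}<\tilde l_m$.

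Second I would establish the key pointwise inequality $z(K^+,\lambda) < z(K,\lambda)$ for all $\lambda$. For every $i\ne i^\star$ the formula (\ref{eq_x}) gives $x_i(K^+,\lambda)=x_i(K,\lambda)$, whereas for $i^\star$ the clamping in (\ref{eq_x}) keeps $x_{i^\star}(K^+,\lambda)\le \tilde u_{m-1}$ and $x_{i^\star}(K,\lambda)\ge \tilde l_m$, so that $x_{i^\star}(K^+,\lambda)\le \tilde u_{m-1}<\tilde l_m\le x_{i^\star}(K,\lambda)$, a strict inequality holding uniformly in $\lambda$. Summing over $i$ then yields $z(K^+,\lambda)<z(K,\lambda)$ for every $\lambda$.

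Finally I would conclude by monotonicity. Evaluating at $\lambda=\lambda^K$ and using $z(K,\lambda^K)=R$ gives $z(K^+,\lambda^K)<R$. Since $z(K^+,\cdot)$ is non-decreasing, every $\lambda\le\lambda^K$ satisfies $z(K^+,\lambda)\le z(K^+,\lambda^K)<R$ and thus cannot be a root of $z(K^+,\cdot)=R$; in particular the smallest such root $\lambda^{K^+}$ must exceed $\lambda^K$. The step I expect to require the most care is the second one: the strictness of $x_{i^\star}(K^+,\lambda)<x_{i^\star}(K,\lambda)$ across all three regimes of the piecewise-linear map (\ref{eq_x}) hinges exactly on the separation $\tilde u_{m-1}<\tilde l_m$ coming from disjointness, and it is precisely this strictness that upgrades the conclusion from $\lambda^{K^+}\ge\lambda^K$ to the strict inequality $\lambda^{K^+}>\lambda^K$.
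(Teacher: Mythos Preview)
Your proof is correct and follows essentially the same route as the paper: both arguments rest on the pointwise strict inequality $x_{i^\star}(K^+,\lambda)<x_{i^\star}(K,\lambda)$ for the single index whose interval assignment changes, deduce $z(K^+,\lambda^K)<R$, and finish by monotonicity of $z(K^+,\cdot)$. Your identification of the changed index as $i^\star=K_{m-1}+1$ and your explicit appeal to the smallest-root convention for $\lambda^{K^+}$ are in fact slightly more careful than the paper's version, but the argument is the same.
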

\begin{proof}
Note that by definition of $x(\cdot,\lambda)$ in (\ref{eq_x}), we have for any $\lambda \in \mathbb{R}$ that $x_i(K^+,\lambda) = x_i(K,\lambda)$ for all $i \neq K_{m-1}$ and $x_{K_{m-1}}(K^+,\lambda) < x_{K_{m-1}}(K,\lambda)$. It follows that
\begin{equation*}
z(K^+,\lambda^K) =
\sum_{i \in N} x_i(K^+,\lambda^K)
<\sum_{i \in N} x_i(K,\lambda^K)
= R
= z(K^+,\lambda^{K^+}).
\end{equation*}
Since $z(K^+,\lambda)$ is non-decreasing, it follows that $\lambda^{K^+} > \lambda^{K}$.
\end{proof}

We now describe how Algorithm~\ref{alg_Q} can be adjusted to solve both $Q(K)$ and $Q(K^+)$ simultaneously in $O(n \log n)$ time. We start by applying the algorithm to $Q(K)$ and record the optimal multiplier $\lambda^K$ and objective value $V^K$. Next, we solve $Q(K^+)$ using the same algorithm to solve $Q(K^+)$, but we use a different multiplier value to start the breakpoint search. More precisely, instead of starting the search at the smallest breakpoint, we start the search at the previous optimal Lagrange multiplier $\lambda^K$. This is a valid value from which to start the search since $\lambda^K < \lambda^{K^+}$ by Lemma~\ref{lemma_lambda}.

Because we are starting from a different multiplier value, we need to calculate the bookkeeping parameters for this particular value. Normally, this requires at least $O(n)$ time and thus would not lead to any efficiency gain. However, note that the problems $Q(K)$ and $Q(K^+)$ have the same set of breakpoints, except for those corresponding to the index $K_{m-1}$. This means that we do not need to compute the bookkeeping parameters and breakpoint sets from scratch. Instead, we may re-use the parameters and sets corresponding to $\lambda^K$ in $Q(K)$ and adjust them only for the change in the breakpoints corresponding to $K_{m-1}$.

We do this adjustment in two steps. First, we remove from the bookkeeping parameters and breakpoint sets all contributions corresponding to $x_{K_{m-1}}(K,\lambda^K)$. More precisely:
\begin{itemize}
\item If $\lambda^K < \alpha_{K_{m-1}}$, we know that $x_{K_{m-1}}(K,\lambda^K) = l_{K_{m-1},m}$. This value must thus be subtracted from the bookkeeping parameter $B(\lambda^K)$ and its contribution to the objective value, $\phi(l_{K_{m-1},m} + b_{K_{m-1}})$, from $V_B(\lambda^K)$. Moreover, neither of the breakpoints $\alpha_{K_{m-1}}$ and $\beta_{K_{m-1}}$ have been considered yet and must thus be removed from the breakpoint sets $\mathcal{A}$ and $\mathcal{B}$, respectively.

\item If $\alpha_{K_{m-1}} \leq \lambda^K < \beta_{K_{m-1}}$, we know that $l_{K_{m-1},m} < x_{K_{m-1}}(K,\lambda^K) < u_{K_{m-1},m}$. Thus, we must subtract its contribution $b_{K_{m-1}}$ from $F(\lambda^K)$ and $1$ from $N_F(\lambda^K)$. Moreover, $\alpha_{K_{m-1}}$ has already been considered, but $\beta_{K_{m-1}}$ not yet, so we must remove $\beta_{K_{m-1}}$ from $\mathcal{B}$.

\item If $\lambda^K \geq \beta_{K_{m-1}}$, we know that $x_{K_{m-1}}(K,\lambda^K) = u_{K_{m-1},m}$. This value must thus be subtracted from $B(\lambda^K)$ and its contribution to the objective value, $\phi(u_{K_{m-1},m} + b_{K_{m-1}})$, from $V_B(\lambda^K)$. Moreover, note that in contrast to the previous two cases, both breakpoints $\alpha_{K_{m-1}}$ and $\beta_{K_{m-1}}$ have already been considered and thus no removal from the breakpoint sets $\mathcal{A}$ and $\mathcal{B}$ is required.
\end{itemize}
Table~\ref{tab_update_1} summarizes this first set of updating rules.

In a second step, we adjust the parameters and sets so that they take into account the new breakpoint values $\alpha^{K^+}_{K_{m-1}} := l_{K_{m-1},m-1} + b_{K_{m-1}}$ and $\beta^{K^+}_{K_{m-1}} := u_{K_{m-1},m-1} + b_{K_{m-1}}$. More precisely, we determine which breakpoints would already have been considered in the search had we initialized the algorithm for the partition $K^+$. Subsequently, we resume the breakpoint search procedure as normal until $\lambda^{K^+}$ has been found. The corresponding updates to the bookkeeping parameters and breakpoint sets are the reverse of those in the first step, i.e., now we \emph{add} the contributions of $x_{K_{m-1}}{K^+,\lambda^K}$ to the relevant parameters and sets instead of subtracting them. Table~\ref{tab_update_2} summarizes this second set of updating rules.

\begin{table}[ht!]
\centering
\resizebox{\columnwidth}{!}{%
\begin{tabular}{r | l l l l l l}
\toprule
\multicolumn{7}{c}{\textbf{First update round: compare $\lambda^K$ to old breakpoints}} \\
 &   $B(\lambda^K)$ &   $F(\lambda^K)$ &   $N_F(\lambda^K)$ & $V_B(\lambda^K)$ & $\mathcal{A}$ & $\mathcal{B}$ \\
\midrule
$\lambda^K < \alpha_{K_{m-1}}$ & $- l_{K_{m-1},m}$ & n.a. & n.a. & $-\phi(l_{K_{m-1},m} + b_{K_{m-1}})$ & Remove $\alpha_{K_{m-1}}$ & Remove $\beta_{K_{m-1}}$ \\
$\alpha_{K_{m-1}} \leq \lambda^K < \beta_{K_{m-1}}$
& n.a. & $- b_{K_{m-1}}$ & $-1$ & n.a. & n.a. & Remove $\beta_{K_{m-1}}$ \\
$\lambda^K \geq \beta_{K_{m-1}}$ 
& $- u_{K_{m-1},m}$ & n.a. & n.a. & $-\phi(u_{K_{m-1},m} + b_{K_{m-1}})$ & n.a. & n.a. \\
\bottomrule
\end{tabular}}
\caption{First set of updating rules for the bookkeeping parameters in Algorithm~\ref{alg_Q} when switching from $K$ to $K^+$.}
\label{tab_update_1}
\end{table}

\begin{table}[ht!]
\centering
\resizebox{\columnwidth}{!}{%
\begin{tabular}{r | l l l l l l}
\toprule
\multicolumn{7}{c}{\textbf{Second update round: compare $\lambda^K$ to new breakpoints}} \\
 &   $B(\lambda^K)$ &   $F(\lambda^K)$ &   $N_F(\lambda^K)$ & $V_B(\lambda^K)$ & $\mathcal{A}$ & $\mathcal{B}$ \\
\midrule
$\lambda^K < \alpha^{K^+}_{K_{m-1}}$ & $ + l_{K_{m-1},m-1}$ & n.a. & n.a. & $+\phi(l_{K_{m-1},m-1}  + b_{K_{m-1}})$ & Add $\alpha^{K^+}_{K_{m-1}}$ & Add $\beta^{K^+}_{K_{m-1}}$ \\
$\alpha^{K^+}_{K_{m-1}} \leq \lambda^K < \beta^{K^+}_{K_{m-1}}$
& n.a. & $+ b_{K_{m-1}}$ & $+1$ & n.a. & n.a. & Add $\beta^{K^+}_{K_{m-1}}$ \\
$\lambda^K \geq \beta^{K^+}_{K_{m-1}}$ 
& $+u_{K_{m-1},m-1}$ & n.a. & n.a. & $+\phi(u_{K_{m-1},m-1} + b_{K_{m-1}})$& n.a. & n.a. \\
\bottomrule
\end{tabular}}
\caption{Second set of updating rules for the bookkeeping parameters in Algorithm~\ref{alg_Q} when switching from $K$ to $K^+$.}
\label{tab_update_2}
\end{table}

The final part of our approach is the following observation. Up until now, we only considered solving two subproblems in one run of the sequential breakpoint search algorithm. However, we may apply the same methodology again to also solve a third subproblem $Q(K^{++})$, where the partition vector $K^{++}$ is obtained from $K^+$ by increasing its last element $K^+_{m-1}$ by $1$. More precisely, after the optimal multiplier $\lambda^{K^+}$ of $Q(K^+)$ has been found, we may again update the bookkeeping parameters and breakpoint sets according to Tables~\ref{tab_update_1} and~\ref{tab_update_2} so that we initialize the breakpoint search for $Q(K^{++})$ at $\lambda^{K^{++}}$. Thus, we apply the same procedure where now the partition vector $K^+$ takes the role of the initial partition vector $K$ and the new vector $K^{++}$ takes the role of $K^+$. In fact, we may apply the procedure repeatedly to eventually obtain the optimal multipliers and objective values of all subproblems corresponding to partition vectors of the form $(K_1, \ldots, K_{m-2}, \tilde{K})$ with $K_{m-2} \leq \tilde{K} \leq n$ in only a \emph{single} run of the sequential breakpoint search procedure. The parameters for the first subproblem $Q((K_1,\ldots,K_{m-2},K_{m-1}))$ are initialized from scratch as in Algorithm~\ref{alg_init} and those of the subsequent subproblems $Q((K_1,\ldots,K_{m-2},\tilde{K}))$ are initialized based on the optimal multiplier $\lambda^{(K_1,\ldots,K_{m-2},\tilde{K}-1)}$ of the previous subproblem $Q((K_1,\ldots, K_{m-2}, \tilde{K}-1))$.

Algorithm~\ref{alg_final} summarizes our approach. We first construct the collection of all valid ``subpartition vectors'' $\mathcal{K}' := \lbrace K \in \mathbb{Z}^{m-2} \ | \ 0 \leq K_1 \leq \ldots \leq K_{m-2} \leq n \rbrace$. Subsequently, for each of subpartition vector $K' := (K_1, \ldots, K_{m-2})$, we carry out a single breakpoint search procedure for all partitions $K \in \mathcal{K}$ of the form $(K_1,\ldots,K_{m-2},\tilde{K})$ with $K_{m-2} \leq \tilde{K} \leq n$ simultaneously. After $\lambda^K$ and $V^K$ have been found for one such partition vector, we apply the update rules in Tables~\ref{tab_update_1} and~\ref{tab_update_2} to initialize the breakpoint search for the next partition. 
\begin{algorithm}[ht!]
\caption{A refined algorithm for RAP-DIBC.}
\label{alg_final}
\begin{multicols}{2}
\begin{algorithmic}[5]
\STATE{\textbf{Input:} continuous convex function $\phi$; parameter $b \in \mathbb{R}^n$, resource value $R$, regions $\cup_{j \in M} [l_{i,j},u_{i,j}]$ for each $i \in N$ satisfying (F1,L1), (F1,L2), (F2,L1), or (F2,L2)}
\STATE{\textbf{Output:} Optimal solution $x^*$ for RAP-DIBC}
\STATE{Establish set of valid partitions  of size $m-2$: $\mathcal{K}' := \lbrace K \in \mathbb{Z}^{m-2} \ | \ 0 \leq K_1 \leq \ldots \leq K_{m-2} \leq n \rbrace$.}
\FOR{$K' \in \mathcal{K}'$}
\STATE{$K_{m-1} := K_{m-2}$; $K = (K',K_{m-1})$}
\STATE{Compute the breakpoints $\alpha_i := l_{i,j^K(i)} + b_i$ and $\beta_i := u_{i,j^K(i)} + b_i$ for each $i \in N$ and establish $\mathcal{A} := \lbrace \alpha_i \ | \ i \in N \rbrace$ and $\mathcal{B} := \lbrace \beta_i \ | \ i \in N \rbrace$}
\STATE{Initialize $B := \sum_{i \in N} l_{i,j^K(i)}$, $F := 0$, $N_F : = 0$, and $V_B := \sum_{i \in N} \phi(l_{i,j^K(i)} + b_i)$}
\WHILE{$K_{m-1} \leq n$}
\STATE{$K = (K',K_{m-1})$}
\REPEAT
\STATE{Determine smallest breakpoint $\lambda_i := \min(\mathcal{A} \cup \mathcal{B})$}
\IF{$B + N_F \lambda_i - F = R$}
\STATE{$\lambda^K = \lambda_i$; $V^K := V_B + N_F \phi(\lambda^K)$}
\ELSIF{$B + N_F \lambda_i - F > R$}
\STATE{$\lambda^K = \frac{R - B + F}{N_F}$; $ V^K := V_B + N_F \phi(\lambda^K)$}
\ELSE
\IF{$\lambda_i$ is lower breakpoint $(\lambda_i = \alpha_i)$}
\STATE{$B := B - l_{i,j^K(i)}$; $F := F + b_i$; $N_F := N_F + 1$; $V_B := V_B - \phi(l_{i,j^K(i)} + b_i)$}
\STATE{$\mathcal{A} := \mathcal{A} \backslash \lbrace \alpha_i \rbrace$}
\ELSE
\STATE{$B := B + u_{i,j^K(i)}$; $F := F - b_i$; $N_F := N_F - 1$;  $V_B := V_B + \phi(u_{i,j^K(i)} + b_i)$}
\STATE{$\mathcal{B} := \mathcal{B} \backslash \lbrace \beta_i \rbrace$}
\ENDIF
\ENDIF
\UNTIL{multiplier $\lambda^K$ has been found}
\IF{$K_{m-1} = n$}
\STATE{\textbf{break}}
\ELSE
\STATE{$K_{m-1} := K_{m-1} + 1$}
\IF{$\lambda^K < \alpha_{K_{m-1}}$}
\STATE{$B := B - l_{K_{m-1},m}$; $V_B := V_B - \phi(l_{K_{m-1},m} + b_{K_{m-1}})$}
\STATE{Remove $\alpha_{K_{m-1}}$ from $\mathcal{A}$; remove $\beta_{K_{m-1}}$ from $\mathcal{B}$}
\ELSIF{$\alpha_{K_{m-1}} \leq \lambda^K < \beta_{K_{m-1}}$}
\STATE{$F := F - b_{K_{m-1}}$; $N_F := N_F - 1$}
\STATE{Remove $\beta_{K_{m-1}}$ from $\mathcal{B}$}
\ELSE
\STATE{$B := B - u_{K_{m-1},m}$; $V_B := V_B - \phi(u_{K_{m-1},m} + b_{K_{m-1}})$}
\ENDIF
\STATE{Compute new breakpoints $\alpha_{K_{m-1}} = l_{K_{m-1},m-1} + b_{K_{m-1}}$ and $\beta_{K_{m-1}} = u_{K_{m-1},m-1} + b_{K_{m-1}}$ }
\IF{$\lambda^K < \alpha_{K_{m-1}}$}
\STATE{$B := B + l_{K_{m-1},m-1}$; $V_B := V_B + \phi(l_{K_{m-1},m-1} + b_{K_{m-1}})$}
\STATE{Add $\alpha_{K_{m-1}}$ to $\mathcal{A}$; add $\beta_{K_{m-1}}$ to $\mathcal{B}$}
\ELSIF{$\alpha_{K_{m-1}} \leq \lambda^K < \beta_{K_{m-1}}$}
\STATE{$F := F + b_{K_{m-1}}$; $N_F := N_F + 1$}
\STATE{Add $\beta_{K_{m-1}}$ to $\mathcal{B}$}
\ELSE
\STATE{$B := B + u_{K_{m-1},m-1}$; $V_B := V_B + \phi(u_{K_{m-1},m-1} + b_{K_{m-1}})$}
\ENDIF
\ENDIF
\ENDWHILE
\ENDFOR
\IF{$\min_{K \in \mathcal{K}} V^K = \infty$}
\RETURN{Instance is infeasible}
\ELSE
\STATE{Select optimal partition vector $K^* := \arg \min_{K \in \mathcal{K}} V^K$ and reconstruct optimal solution as $x^* := x(K^*, \lambda^{K^*})$}
\RETURN{$x^*$}
\ENDIF
\end{algorithmic}
\end{multicols}
\end{algorithm}

Theorem~\ref{th_complexity} establishes the worst-case time complexity of the algorithm:
\begin{theorem}
An instance of RAP-DIBC satisfying one of the four cases (F1,L1), (F1,L2), (F2,L1), or (F2,L2) can be solved by Algorithm~\ref{alg_final} in $O \left( \binom{n + m - 2}{m - 2}  (n \log n + nF) \right)$ time.
\label{th_complexity}
\end{theorem}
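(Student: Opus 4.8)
The plan is to split the argument into a short correctness part and a longer running-time part, with essentially all the effort concentrated in the latter. For correctness I would first note that the outer loop over $\mathcal{K}'$ together with the inner sweep of $\tilde K := K_{m-1}$ from $K_{m-2}$ to $n$ enumerates every valid partition vector $K \in \mathcal{K}$ exactly once, because each $K = (K_1,\ldots,K_{m-1})$ is uniquely split into its prefix $K' = (K_1,\ldots,K_{m-2}) \in \mathcal{K}'$ and its last coordinate $\tilde K \in \{K_{m-2},\ldots,n\}$. For a fixed $K'$, the embedded breakpoint search is precisely Algorithm~\ref{alg_Q} applied to $Q(K)$, except that for every partition after the first it is warm-started at the previous multiplier $\lambda^{K}$ rather than at $\min(\mathcal{A}\cup\mathcal{B})$. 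This warm start is legitimate by Lemma~\ref{lemma_lambda}, which gives $\lambda^{K^+} > \lambda^{K}$ so that the sought multiplier always lies ahead of the current search position, while the rules of Tables~\ref{tab_update_1} and~\ref{tab_update_2} restore $B$, $F$, $N_F$, $V_B$ and the sets $\mathcal{A},\mathcal{B}$ to exactly the values they would take at $\lambda^{K}$ under the breakpoints of $K^+$. Hence each $\lambda^K$ and $V^K$ is computed correctly, and combining this with Lemma~\ref{lemma_opt} (a monotone optimal solution exists) and Lemma~\ref{lemma_reduction} (reduction to the quadratic case), returning the partition minimizing $V^K$ yields an optimal solution to RAP-DIBC.

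For the running time I would first count $|\mathcal{K}'|$. The set $\mathcal{K}'$ consists of the non-decreasing integer sequences of length $m-2$ with entries in $\{0,1,\ldots,n\}$, i.e. the multisets of size $m-2$ drawn from $n+1$ values, so $|\mathcal{K}'| = \binom{(n+1)+(m-2)-1}{m-2} = \binom{n+m-2}{m-2}$. It then suffices to show that the total work done for a single $K' \in \mathcal{K}'$ is $O(n\log n + nF)$; multiplying by $|\mathcal{K}'|$ gives the claimed bound. The initialization for a fixed $K'$ costs $O(n\log n)$ to sort $\mathcal{A}$ and $\mathcal{B}$ and $O(nF)$ to form the initial value of $V_B$.

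The crux of the argument, and the step I expect to be the main obstacle, is to show that the total number of breakpoints processed across \emph{all} iterations of the inner \textsc{repeat} loop during the single \textsc{while} loop for a fixed $K'$ is $O(n)$ rather than $O(n^2)$. The argument I would give rests on two facts. First, by Lemma~\ref{lemma_lambda} the multipliers $\lambda^{K}$ strictly increase as $\tilde K$ increases, so the search position never moves backward and every breakpoint, once considered and deleted from $\mathcal{A}$ or $\mathcal{B}$, is never revisited. Second, passing from $K$ to $K^+$ alters the interval assignment of only the single index $\tilde K$, and the second update round inserts the new breakpoints $\alpha^{K^+}_{\tilde K}, \beta^{K^+}_{\tilde K}$ only when they exceed the current $\lambda^{K}$, so every inserted breakpoint still lies ahead of the search. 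Consequently the breakpoints ever present in $\mathcal{A}\cup\mathcal{B}$ number at most the $2n$ initial ones plus at most two per increment of $\tilde K$, and there are at most $n$ such increments; the sweep therefore processes $O(n)$ breakpoints in total. I would take care here to fix the tie-breaking convention (smallest feasible multiplier, as in Algorithm~\ref{alg_Q}) so that the analysis remains valid when a warm-start value $\lambda^K$ coincides with a breakpoint.

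Finally I would account for the remaining per-$K'$ costs and assemble the bound. Each processed breakpoint does $O(1)$ arithmetic on $B,F,N_F$, incurs one $\phi$-evaluation ($O(F)$) for the $V_B$ update, and costs $O(\log n)$ to extract from or insert into the sorted breakpoint structure; over the $O(n)$ processed breakpoints this totals $O(n\log n + nF)$. Across the \textsc{while} loop there are at most $n$ partition switches, each applying Tables~\ref{tab_update_1} and~\ref{tab_update_2} at $O(1)$ arithmetic cost plus $O(F)$ for the $V_B$ corrections, and at most $n+1$ evaluations of the form $V^K := V_B + N_F\,\phi(\lambda^K)$, each $O(F)$; both contribute $O(nF)$. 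Adding the $O(n\log n + nF)$ initialization gives $O(n\log n + nF)$ per $K'$, and therefore $O\!\left(\binom{n+m-2}{m-2}(n\log n + nF)\right)$ overall, as claimed.
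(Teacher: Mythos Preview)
Your proposal is correct and follows essentially the same approach as the paper: count $|\mathcal{K}'|=\binom{n+m-2}{m-2}$, then bound the work per $K'$ by $O(n\log n + nF)$ via the monotonicity of $\lambda^K$ (Lemma~\ref{lemma_lambda}) together with the observation that only $O(n)$ breakpoints are ever present and each is processed at most once. The one point the paper makes explicit that you gloss over is the cost of the set operations in the partition-switch step: your claim that Tables~\ref{tab_update_1}--\ref{tab_update_2} cost ``$O(1)$ arithmetic plus $O(F)$'' ignores the removals and insertions into $\mathcal{A},\mathcal{B}$, and in particular the removals in Table~\ref{tab_update_1} are of \emph{arbitrary} (non-minimum) elements, which a plain heap does not support in $O(\log n)$; the paper handles this by lazy deletion (leave the stale breakpoint in the heap and discard it when it surfaces), but a balanced BST would work equally well---either way your $O(n\log n+nF)$ bound per $K'$ stands.
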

\begin{proof}
We first consider the time complexity of each iteration of the for-loop in Algorithm~\ref{alg_final} and focus on the breakpoint search and parameter updating procedure separately. First, note that throughout one complete breakpoint search, i.e., one iteration of the for-loop in Algorithm~\ref{alg_final}, the two breakpoints for each given variable are each considered at most once (either before or after they have been updated).  Thus, the total number of iterations of the while-loop within one iteration of the for-loop is at most $O(n)$, leading to an overall worst-case time complexity of $O(n \log n)$. In other words, each iteration of the while-loop has $O(\log n)$ amortized time complexity.

Second, with regard to the updating procedure, note that all parameter updates can be done in $O(F)$ time. Moreover, when storing $\mathcal{A}$ and $\mathcal{B}$ as priority queues, removing considered breakpoints (Lines 19 and 22) and adding new breakpoint values (Lines 42 and 45) can be done in $O(\log n )$ time. Removing arbitrary breakpoints (Lines 32 and 35) from such a priority queue would normally take $O(n)$ time. However, we propose a different approach. Instead of removing an outdated breakpoint value, we keep it in the heap. Whenever a new smallest breakpoint is determined in Line~11, we first check if this is such an outdated breakpoint. We do this by checking its value with the current actual breakpoint value. If this does not match, the breakpoint will be removed, which now takes $O(\log n)$ time since it is the smallest breakpoint in the priority queue. This check takes $O(1)$ time, meaning that each updating procedure takes $O(\log n)$ time in total. Thus, the worst-case time complexity of all updating procedures within one iteration of the for-loop of the algorithm is $O(n \log n + nF)$.

It follows that each iteration of the for-loop can be executed in $O(n \log n + nF)$ time. The result of the theorem follows since the number of iterations of the for-loop is $\binom{n + m - 2}{m - 2}$.
\end{proof}

For $m = 2$, the worst-case time complexity in Theorem~\ref{th_complexity} reduces to $O(n \log n + nF)$. This is a significant improvement over the $O(n^2(1+F))$ complexity of Algorithm~\ref{alg_init} and matches the time complexity of the method in \cite{SchootUiterkamp2018} that solves a specific special case with quadratic objective functions. For $m=3$, the complexity of Algorithm~\ref{alg_final} becomes $O(n^2 \log n + n^2F)$, which also improves upon the complexity of Algorithm~\ref{alg_init} by a factor $O \left(\frac{n}{\log n} \right)$.

\section{Integer variables}
\label{sec_integer}

In this section, we present an adjustment to Algorithm \ref{alg_final} so that it also outputs an optimal solution to RAP-DIBC with integer variables. For convenience, we state this problem explicitly as $\tilde{P}$:
\begin{align*}
\tilde{P} \colon \ \min_{x \in \mathbb{Z}^n} \ & \sum_{i \in N} \phi(x_i + b_i)  \\
\text{s.t. } & \sum_{i \in N} x_i = R,  \\
& x_i \in \cup_{j \in M} [l_{i,j} , u_{i,j}], \quad i \in N.
\end{align*}
Here, we assume that all parameters $b$, $R$, $l$, and $u$ are integer-valued.

First, we note that all arguments and lemmas in Section~\ref{sec_prob} to arrive at Algorithm~\ref{alg_init} are also valid for $\tilde{P}$. The only adjustment of this algorithm is in Line~8 where an optimal solution to $Q(K)$ is computed. For $\tilde{P}$, now in each iteration an optimal solution to the integral version $\tilde{Q}(K)$ of $Q(K)$ must be computed. This integral version $\tilde{Q}(K)$ is given by
\begin{align*}
\tilde{Q}(K) \colon \ \min_{x \in \mathbb{Z}^n} \ & \sum_{i \in N} \frac{1}{2} (x_i + b_i)^2 \\
\text{s.t. } & \sum_{i \in N} x_i = R;  \\
& x_i \in [l_{i,j^K(i)}, u_{i,j^K(i)}], \quad i \in N. 
\end{align*}
In general, this means that we can solve $\tilde{P}$ by solving each subproblem $\tilde{Q}(K)$ for all valid partition vectors $K$ and calculate the corresponding objective values for the separable objective function $\sum_{i \in N} \phi(x_i + b_i)$.
Since we require only the optimal objective value and not the optimal solution itself when considering different partition vectors, we propose an approach to find the optimal objective value of $\tilde{Q}(K)$ given the optimal Lagrange multiplier $\lambda^K$ of the original continuous problem $Q(K)$. The advantage of this approach is that it can be easily integrated into Algorithm~\ref{alg_final} and executed each time an optimal multiplier for the continuous version of the problem has been found in Lines~10-25.

Our approach is in its essence a tailored version of the approach in \cite{Weinstein1973} for general separable convex objective functions to $\tilde{Q}(K)$ (see also \cite{Ibaraki1988}). We first consider the following Lagrangian relaxation of $\tilde{Q}(K)$:
\begin{align*}
\tilde{Q}(K, \lambda): \quad \min_{x \in \mathbb{Z}^n} \ & \sum_{i \in N} \frac{1}{2} (x_i + b_i)^2 - \lambda \left(\sum_{i \in N} x_i - R \right) \\
\text{s.t. } & x_i \in [l_{i,j^K(i)}, u_{i,j^K(i)}], \quad i \in N.
\end{align*}
As opposed to the Lagrangian relaxation of $Q(K)$, the current relaxation $\tilde{Q}(K, \lambda)$ does not have a unique solution for every $\lambda$. However, observe that the following conditions are both necessary and sufficient for a feasible solution $x$ to $\tilde{Q}(K,\lambda)$ to also be optimal for $\tilde{Q}(K,\lambda)$:
\begin{equation}
x_i \begin{cases}
= l_{i,j^K(i)} & \text{if } \lambda \leq l_{i,j^K(i)} + b_i; \\
= \lfloor \lambda \rfloor - b_i & \text{if } \ l_{i,j^K(i)} + b_i \leq \lambda \leq u_{i,j^K(i)} + b_i \text{ and } \lambda - \lfloor \lambda \rfloor < \frac{1}{2}; \\
\in \lbrace \lfloor \lambda \rfloor - b_i ,  \lceil \lambda \rceil - b_i \rbrace & \text{if } \ l_{i,j^K(i)} + b_i \leq \lambda \leq u_{i,j^K(i)} + b_i \text{ and } \lambda - \lfloor \lambda \rfloor = \frac{1}{2}; \\
= \lceil \lambda \rceil - b_i & \text{if } \ l_{i,j^K(i)} + b_i \leq \lambda \leq u_{i,j^K(i)} + b_i \text{ and } \lambda - \lfloor \lambda \rfloor > \frac{1}{2}; \\
= u_{i,j^K(i)} & \text{if } \lambda \geq u_{i,j^K(i)} + b_i; \\
\end{cases}
\label{eq_L_opt_Z}
\end{equation}
An important observation is that given $\lambda \in \mathbb{R}$, any optimal solution $\tilde{x}$ to $\tilde{Q}(K,\lambda)$ with $\sum_{i \in N} \tilde{x} = R$ is also optimal for $\tilde{Q}(K)$. To see this, note that for any feasible solution $x$ of $\tilde{Q}(K)$ we have
\begin{align*}
\sum_{i \in N} \frac{1}{2} (\tilde{x}_i + b_i)^2
&=
\sum_{i \in N} \frac{1}{2} (\tilde{x}_i + b_i)^2 - \lambda \left(\sum_{i \in N} \tilde{x}_i - R \right) \\
&\leq
\sum_{i \in N} \frac{1}{2} (x_i + b_i)^2 - \lambda \left(\sum_{i \in N} x_i - R \right)
=
\sum_{i \in N} \frac{1}{2} (x_i + b_i)^2.
\end{align*}
It follows that $\tilde{x}$ is also optimal for $\tilde{Q}(K)$. One way to find such a solution $\tilde{x}$ is to redistribute the fractional parts of the non-integer solution $x^K$ to $Q(K)$ over the active variables, i.e., the variables that are not equal to one of their bounds. Given $\lambda^K$, the sum of these fractional parts equals $N_F^+ := N_F(\lambda^K) (\lambda^K - \lfloor \lambda^K \rfloor)$. Moreover, let $i'$ denote the $N_F^+$-th index for which $x^K_i + b_i = \lambda^K$, i.e., the $N_F^+$-th active variable. We consider the following candidate solution $\tilde{x}$, which is obtained from $x^K$ by redistributing a value of $N_F^+$ as equally as possible over the first $N_F^+$ active variables:
\begin{equation}
\tilde{x}_i := \begin{cases}
x^K_i & \text{if } x^K_i \in \lbrace l_{i,j^K(i)}, u_{i,j^K(i)} \rbrace; \\
\lceil x^K_i \rceil & \text{if } x^K_i + b_i = \lambda^K \text{ and } i \leq i'; \\
\lfloor x^K_i \rfloor & \text{if } x^K_i + b_i = \lambda^K \text{ and } i > i'.
\end{cases}
\label{eq_x_cand}
\end{equation}

We show that $\tilde{x}$ is both feasible and optimal for $\tilde{Q}(K)$. Regarding feasibility, note that for any $i \in N$ with $x^K_i + b_i = \lambda^K$, we have $\lfloor x^K_i \rfloor + b_i = \lfloor \lambda^K \rfloor$ and $\lceil x^K_i \rceil + b_i = \lceil \lambda^K \rceil$. It follows that
\begin{align*}
\sum_{i: \ x^K_i + b_i = \lambda^K, \ i \leq i'}
(\tilde{x}_i - x^K_i)
&= N_F^+( \lceil \lambda^K \rceil - \lambda^K); \\
\sum_{i: \ x^K_i + b_i = \lambda^K, \ i > i'}
(\tilde{x}_i - x^K_i)
&= (N_F(\lambda^K) - N_F^+) ( \lfloor \lambda^K \rfloor - \lambda^K)
\end{align*}
This implies that
\begin{align*}
\sum_{i \in N} (\tilde{x}_i - x^K_i)
&=
\sum_{i: \ x^K_i + b_i = \lambda^K} (\tilde{x}_i - x^K_i) \\
&=N_F^+( \lceil \lambda^K \rceil - \lambda^K)
+ (N_F(\lambda^K) - N_F^+) ( \lfloor \lambda^K \rfloor - \lambda^K) \\
&=N_F^+ \lceil \lambda^K \rceil 
+ (N_F(\lambda^K) - N_F^+) \lfloor \lambda^K \rfloor - N_F(\lambda^K) \lambda^K \\
&= N_F^+(\lceil \lambda^K \rceil - \lfloor \lambda^K \rfloor) + N_F(\lambda^K) (\lfloor \lambda^K \rfloor - \lambda^K) \\
&= N_F^+ - N_F^+ = 0.
\end{align*}
It follows that $\sum_{i \in N} \tilde{x}_i = \sum_{i \in N} x^K_i = R$ and thus that $\tilde{x}$ is feasible for $\tilde{Q}(K)$.

We show that $\tilde{x}$ is optimal for $\tilde{Q}(K)$ by demonstrating its optimality for $\tilde{Q}(K, \tilde{\lambda})$ for $\tilde{\lambda} := \frac{1}{2}(\lfloor \lambda^K \rfloor + \lceil \lambda^K \rceil)$. We do this by checking the optimality conditions in (\ref{eq_L_opt_Z}). Since $\tilde{\lambda} - \lfloor \tilde{\lambda} \rfloor = \frac{1}{2}$, we need to only consider the first, third, and fifth condition in (\ref{eq_L_opt_Z}):
\begin{description}
\item[Condition 1] If $\tilde{\lambda} \leq l_{i,j^K(i)} + b_i$, then also $\lambda^K \leq l_{i,j^K(i)} + b_i$ since $\lambda^K \leq \tilde{\lambda}$. It follows from (\ref{eq_x}) that $x^K_i = l_{i,j^K(i)}$ and thus $\tilde{x}_i = l_{i,j^K(i)}$.
\item[Condition 3] If $l_{i,j^K(i)} + b_i \leq \tilde{\lambda} \leq u_{i,j^K(i)} + b_i$, then also $l_{i,j^K(i)} + b_i \leq \lambda^K \leq u_{i,j^K(i)} + b_i$ since both $l_{i,j^K(i)} + b_i$ and $u_{i,j^K(i)} + b_i$ are integer. It follows from (\ref{eq_x}) that $x^K_i = \lambda^K - b_i$, which implies that either $\tilde{x}_i = \lfloor x^K_i \rfloor = \lfloor \lambda^K \rfloor - b_i = \lfloor \tilde{\lambda} \rfloor - b_i$ or $\tilde{x}_i = \lceil x^K_i \rceil = \lceil \lambda^K \rceil - b_i = \lceil \tilde{\lambda} \rceil - b_i$.
\item[Condition 5] If $\tilde{\lambda} \geq u_{i,j^K(i)} + b_i$, then also $\lambda^K \geq u_{i,j^K(i)} + b_i$ since $u_{i,j^K(i)} + b_i$ is integer. It follows from (\ref{eq_x}) that $x^K_i = u_{i,j^K(i)}$ and thus $\tilde{x}_i = u_{i,j^K(i)}$.
\end{description}
The candidate solution $\tilde{x}$ and multiplier $\tilde{\lambda}$ satisfy all conditions, meaning that $\tilde{x}$ is optimal for $\tilde{Q}(K)(\tilde{\lambda})$ and thus also for $\tilde{Q}(K)$. 

Recall that our end goal is to obtain the objective value of $\tilde{x}$ for $\sum_{i \in N} \phi(x_i + b_i)$. Note, that this value can now be expressed directly in terms of $\lambda^K$ and the bookkeeping parameters of $Q(K)$ as
\begin{align}
\tilde{V}^K &:= V_B(\lambda^K) + N_F^+ \phi(\lceil \lambda^K \rceil)
+ (N_F(\lambda^K) - N_F^+) \phi(\lfloor \lambda^K \rfloor) \nonumber \\
&=
\tilde{V}^K := V_B(\lambda^K) + N_F(\lambda^K)(\lambda^K - \lfloor \lambda^K \rfloor) \phi(\lceil \lambda^K \rceil)
+ (N_F(\lambda^K) - (N_F(\lambda^K)(\lambda^K - \lfloor \lambda^K \rfloor)) \phi(\lfloor \lambda^K \rfloor).
\label{eq_int_obj}
\end{align}
Given $V_B(\lambda^K)$, $N_F(\lambda^K)$, and $\lambda^K$ as output of the breakpoint search procedure in Algorithm~\ref{alg_final}, this computation takes $O(F)$ time and thus does not alter the worst-case time complexity of the algorithm.

We conclude this section with a note on the computational complexity of both RAP-DIBC and $\tilde{P}$ for quadratic objective functions. For fixed $m$, Algorithm~\ref{alg_final} outputs an optimal solution to RAP-DIBC with quadratic objective function in \emph{strongly} polynomial time when considering an algebraic tree computation model. Thereby, we add a new problem to the class of strongly polynomially solvable mixed-integer quadratic programming problems. If, additionally, we allow the floor operation in the considered computational model (see also the discussion in \cite{Hochbaum1994}), also the algorithm for $\tilde{P}$, i.e., with the adaptation in (\ref{eq_int_obj}) included, outputs an optimal solution to the integer problem $\tilde{P}$ in \emph{strongly} polynomial time for fixed $m$.

\section{Evaluation}
\label{sec_eval}

In this section, we asses the practical efficiency of Algorithm~\ref{alg_final}. We first focus on the performance of our approach on realistic instances of Min-Thres-EV. In a second step, we assess the scalability of our approach by evaluating them on synthetically generated instances of varying size. As far as we are aware, there are no tailored algorithms to solve RAP-DIBC, $\tilde{P}$, or one of their special cases with $m > 1$. Therefore, we compare the efficiency of our approach with that of the off-the-shelf solver Gurobi \cite{gurobi}. To increase the fairness of the comparison, we only consider quadratic objectives so that both approaches compute an optimal solution to the problem. Moreover, we consider only the continuous problem RAP-DIBC since initial testing suggested that there was no significant difference between the performance of our algorithm and that of Gurobi for RAP-DIBC and $\tilde{P}$. We implemented our algorithm in Python version 3.7 and integrated Gurobi using Gurobi's Python API; the corresponding code is accessible via \url{https://github.com/mhhschootuiterkamp/RAP_DIBC}. All simulations and computations are executed on a 2.80-GHz Dell Latitude 3420 with an Intel Core i7-6700HQ CPU and 16 GB of RAM.

Section~\ref{sec_instance} describes the construction of the instance sets that we will use to perform our computational experiments and in Section~\ref{sec_results}, we present and discuss our results.

\subsection{Instance generation and implementation details}
\label{sec_instance}

We generate two types of instances, namely instances of the minimum-threshold EV charging Min-Thres-EV and a set of randomly generated instances for the scalability evaluation. In both cases, we choose $\phi$ to be the quadratic function $\phi(y) = y^2$ to allow for a fairer comparison with the Gurobi implementation.

We first create a set of instances for Min-Thres-EV. For this, we consider a setting wherein an EV is empty and available for residential charging from 18:00 PM and must be fully charged by 8:00 AM on the next day. We divide this charging horizon of 14 hours into 15-minute time intervals, so that $T = 56$ and $\Delta t = \frac{1}{4}$. For the charging restriction, we use the Nissan Leaf as a reference EV \cite{Nissan2023}, meaning that $X^{\max} = 6.6$~kW and the maximum capacity of the EV battery is 39~kWh. Furthermore, confirming to the recommendations in \cite{ApostolakiIosifidou2017}, we set $X^{\min} = 1.1$~kW. To capture differences in power consumption profiles between households, we run our simulations using real power consumption measurement data of 40 households that were obtained in the field test described in \cite{Hoogsteen2017}. For each household, we simulate 300 charging sessions , where each session corresponds to a combination of a specific day (out of 100 days) and a specific charging requirement (out of three). The three different charging requirements that we consider correspond to charging 25\%, 50\%, or 100\% of the battery, respectively, meaning that we choose $R \in \lbrace 9,750; 19,500; 39,000 \rbrace$.

Finally, we create instances for the scalability comparison as follows. We first generate the vectors $\tilde{l}$ and $\tilde{u}$. For this, we draw $m-1$ random variables $X_2, \ldots, X_{m}$ and $m-2$ random variables $Y_2,\ldots,Y_{m-1}$ from the uniform distribution $U(0,1)$, initialize $\tilde{u}_1 := 2$, and set $\tilde{l}_j = \tilde{u}_{j-1} + X_j$ for $j \in \lbrace 2, \ldots, m \rbrace$ and $\tilde{u}_j = \tilde{l}_j + Y_j$ for $j \in \lbrace 2, \ldots, m-1 \rbrace$. Next, we generate the lower bounds $l_{1,1},\ldots, l_{n,1}$ and upper bounds $u_{1,m},\ldots, u_{n,m}$. We generate two sequences $W,Z$ of $n-1$ random variables from the uniform distribution $U(0;\frac{1}{n})$, initialize $l_{n,1} = 1$ and $u_{1,m} = l_{1,m} + 1 = \tilde{l}_m + 1$, and set $l_{i,1} = l_{i+1,1} - W_i$ and $u_{i+1,1} = u_{i,1} - Z_i$ for $i \in \lbrace 1,\ldots,n-1 \rbrace$. Note that by construction, this choice of parameters satisfies all four special cases (F1,L1), (F1,L2), (F2,L1), and (F2,L2) that we study in this paper and for which Algorithm~\ref{alg_final} is valid. We select the resource value $R$ from the uniform distribution $U(\sum_{i \in N} l_{i,1}, \sum_{i \in N} u_{i,m})$. Note that this choice of $R$ ensures feasibility of the problem by Lemma~\ref{lemma_feasible}. Finally, we set $b_n = 0$ and $b_i = b_{i+1} + V_i$ for $i \in \lbrace 1, \ldots, n-1 \rbrace$, where each $V_i$ is a random variable drawn from $U(0,1)$.

Since we are not aware of other tailored algorithms for RAP-DIBC, we compare the performance of our algorithm to that of the off-the-shelf solver Gurobi version 10.0.1 \cite{gurobi}. Here, we implement the disjoint interval bound constraints (\ref{eq_box}) as follows. For each combination of variable index $i \in N$ and interval $j \in M$, we introduce a binary variable $y_{i,j}$ that is one if $x_i$ lies in the $j^{\text{th}}$ interval, i.e., $l_{i,j} \leq x_i \leq u_{i,j}$, and zero otherwise. For each $i \in N$, we add the constraint $\sum_{j \in M} y_{i,j} = 1$ to ensure that at most one interval is selected. Lastly, for each $i \in N$, we add a constraint that sets the correct lower and upper bounds on $x_i$ depending on which of its corresponding binary variables equals one, i.e., $\sum_{j \in M} l_{i,j} y_{i,j} \leq x_i \leq \sum_{j \in M} u_{i,j} y_{i,j}$ for each $i \in N$.

We generate instances for different values of $n$ and $m$. Initial testing confirmed the time complexity analysis in Section~\ref{sec_multiple} that the running time of our algorithm increases drastically with the value of $m$. Therefore, we decide to run simulations for $m \in \lbrace 2,3,4 \rbrace$ with the following values for $n$:
\begin{itemize}
\item For $m=2$: $n \in \lbrace 10; 20; 50; 100; 200; 500; \ldots ;  10,000; 20,000; 50,000; 100,000 \rbrace$;
\item For $m=3$:  $n \in \lbrace 10; 20; 50; 100; 200; 500; 1,000; 2,000; 5,000; 10,000 \rbrace$;
\item For $m=4$:  $n \in \lbrace 10; 20; 50; 100; 200; 500; 1,000 \rbrace$.
\end{itemize}
For each of these combinations of $m$ and $n$, we generate and solve ten instances. For the Gurobi implementation, we set the maximum solving time to one hour.

\subsection{Results}
\label{sec_results}
In this section, we present and discuss the results of the evaluation. We first focus on the performance of Algorithm~\ref{alg_final} on instances of Min-Thres-EV. Table~\ref{tab_EV} shows the mean execution times of Algorithm~\ref{alg_final} and Gurobi on these instances, split out by charging requirement. Moreover, Figure~\ref{plot_EV_box} shows for each charging requirement the boxplot of the ratios between the execution times of Gurobi and our algorithm. Figure~\ref{plot_EV_box} indicates that, on averag our algorithm is fourteen to fifteen times as fst as Gurobi. The results in Table~\ref{tab_EV} suggest that the execution times of both our algorithm and the Gurobi implementation decrease slightly as the charging requirement increases. Despite this, the execution times and the relative difference in execution time between Algorithm~\ref{alg_final} and Gurobi are in the same order of magnitude for each charging requirement. Finally, we note that Algorithm~\ref{alg_final} solves the realistic instances of Min-Thres-EV in the order of milliseconds. Common speed and delay requirements for communication networks in DEM systems are significantly higher than this \cite{Deshpande2011}. This means that our algorithm is suitable for integration in such systems since it is unlikely that it will be the main (computational) bottleneck.

\begin{table}[ht!]
\centering
\begin{tabular}{r | r r}
\toprule
$R$ & Algorithm~\ref{alg_final} & Gurobi \\
\midrule
$0.25 \cdot 39,000$ & $4.53 \cdot 10^{-4}$ & $2.09 \cdot 10^{-2}$ \\
$0.5 \cdot 39,000$ & $3.51 \cdot 10^{-4}$ & $1.48 \cdot 10^{-2}$ \\
$39,000$ & $3.20 \cdot 10^{-4}$ & $1.49 \cdot 10^{-2}$ \\
\bottomrule
\end{tabular}
\caption{Mean execution times (s) for Algorithm~\ref{alg_final} and Gurobi for each charging requirement.}
\label{tab_EV}
\end{table}

\begin{figure}[ht!]
\centering
\includegraphics{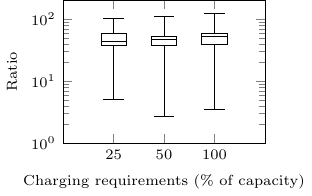}
\caption{Boxplots of the ratios of the execution times between Gurobi and Algorithm~\ref{alg_final} for each charging requirement.}
\label{plot_EV_box}
\end{figure}

Figure~\ref{plot_scale} shows the results of the scalability evaluation. To further visualize the dependency of the execution time on $n$, we fit a power law, i.e., a function $f(n) = c_1 \cdot n^{c_2}$ to the execution times of Algorithm~\ref{alg_final}. Furthermore, Table~\ref{tab_m} shows the mean execution times for each considered value of $m$ and $n$. For the case $m=2$ and $n=1,000$, all execution times of Gurobi exceeded the time limit of 3,600 seconds and thus no mean value is presented. The power laws in Figure~\ref{plot_scale} show that the execution time of Algorithm~\ref{alg_final} grows linearly in $n$ for $m=2$, quadratically for $m=3$, and cubicly for $m=4$. This suggests that in practice the algorithm is a factor $O(\log n)$ faster than the theoretical worst-case time complexity suggests. On the other hand, the execution times of Gurobi do not change much for different values of $m$.

\begin{figure}[ht!]
\begin{subfigure}[t]{\textwidth}
\includegraphics{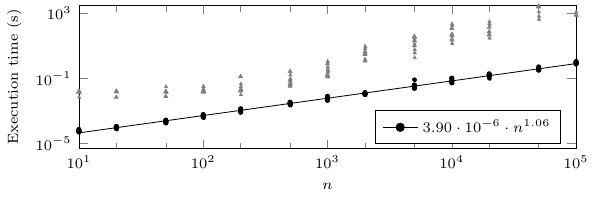}
\caption{$m=2$.}
\label{plot_scale_m2}
\end{subfigure}
\begin{subfigure}[t]{.55\textwidth}
\includegraphics{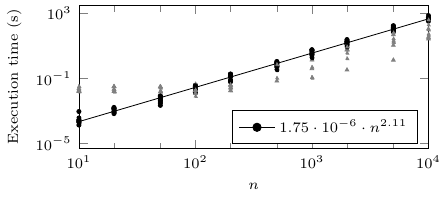}
\caption{$m=3$.}
\label{plot_scale_m3}
\end{subfigure}
\begin{subfigure}[t]{.44\textwidth}
\includegraphics{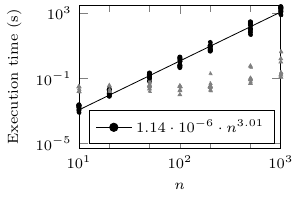}
\caption{$m=4$.}
\label{plot_scale_m4}
\end{subfigure}
\caption{Execution times of Algorithm~\ref{alg_final} (circles, black) and Gurobi (tirangles, gray).}
\label{plot_scale}
\end{figure}

\begin{table}[ht!]
\centering
\begin{subtable}[t]{0.32\textwidth}
\centering
\resizebox{\columnwidth}{!}{%
\begin{tabular}[t]{r | r r }
\toprule
$n$ & Algorithm~\ref{alg_final} & Gurobi \\
\midrule
10 & $5.83 \cdot 10^{-5}$ & $1.45 \cdot 10^{-2}$ \\
20 & $9.81 \cdot 10^{-5}$ & $1.02 \cdot 10^{-2}$ \\
50 & $2.19 \cdot 10^{-4}$ & $1.31 \cdot 10^{-2}$ \\
100 & $4.81 \cdot 10^{-4}$ & $1.98 \cdot 10^{-2}$ \\
200 & $1.09 \cdot 10^{-3}$ & $3.36 \cdot 10^{-2}$ \\
500 & $2.88 \cdot 10^{-3}$ & $9.00 \cdot 10^{-2}$ \\
1,000 & $6.02 \cdot 10^{-3}$ & $4.00 \cdot 10^{-1}$ \\
2,000 & $1.14 \cdot 10^{-2}$ & $3.93$ \\
5,000 & $3.74 \cdot 10^{-2}$ & $1.86 \cdot 10^{01}$ \\
10,000 & $7.55 \cdot 10^{-2}$ & $7.86 \cdot 10^{01}$ \\
20,000 & $1.56 \cdot 10^{-1}$ & $1.05 \cdot 10^{03}$ \\
50,000 & $3.83 \cdot 10^{-1}$ & $2.47 \cdot 10^{03}$ \\
100,000 & $9.20 \cdot 10^{-1}$ & - \\
\bottomrule
\end{tabular}}
\caption{$m=2$.}
\label{tab_m2}
\end{subtable}
\hspace{.2pt}
\begin{subtable}[t]{0.32\textwidth}
\centering
\resizebox{\columnwidth}{!}{%
\begin{tabular}[t]{r | r r }
\toprule
$n$ & Algorithm~\ref{alg_final} & Gurobi \\
\midrule 
10 & $3.07 \cdot 10^{-4}$ & $2.04 \cdot 10^{-2}$ \\
20 & $1.13 \cdot 10^{-3}$ & $2.15 \cdot 10^{-2}$ \\
50 & $5.45 \cdot 10^{-3}$ & $2.26 \cdot 10^{-2}$ \\
100 & $2.87 \cdot 10^{-2}$ & $2.68 \cdot 10^{-2}$ \\
200 & $1.32 \cdot 10^{-1}$ & $2.82 \cdot 10^{-2}$ \\
500 & $7.47 \cdot 10^{-1}$ & $5.00 \cdot 10^{-1}$ \\
1,000 & $4.15$ & $1.28$ \\
2,000 & $1.64 \cdot 10^{01}$ & $6.56$ \\
5,000 & $1.27 \cdot 10^{02}$ & $4.18 \cdot 10^{01}$ \\
\hphantom{0}10,000 & $4.65 \cdot 10^{02}$ & $1.36 \cdot 10^{02}$ \\
\bottomrule
\end{tabular}}
\caption{$m=3$.}
\label{tab_m3}
\end{subtable}
\hspace{.2pt}
\begin{subtable}[t]{0.32\textwidth}
\centering
\resizebox{\columnwidth}{!}{%
\begin{tabular}[t]{r | r r }
\toprule
$n$ & Algorithm~\ref{alg_final} & Gurobi \\
\midrule 
10 & $1.78 \cdot 10^{-3}$ & $2.31 \cdot 10^{-2}$  \\
20 & $1.12 \cdot 10^{-2}$ & $2.04 \cdot 10^{-2}$ \\
50 & $1.50 \cdot 10^{-1}$ & $4.30 \cdot 10^{-2}$ \\
100 & $1.14$ & $2.80 \cdot 10^{-2}$ \\
200 & $8.35$ & $4.65 \cdot 10^{-2}$ \\
500 & $1.66 \cdot 10^{02}$ & $2.20 \cdot 10^{-1}$ \\
\hphantom{00}1,000 & $1.78 \cdot 10^{03}$ & $1.25$ \\
\bottomrule
\end{tabular}}
\caption{$m=4$.}
\label{tab_m4}
\end{subtable}
\caption{Mean execution times (s) for Algorithm~\ref{alg_final} and Gurobi.}
\label{tab_m}
\end{table}

The results in Table~\ref{tab_m2} show that for the case $m=2$ our algorithm outperformed the Gurobi implementation by two orders of magnitude in almost all considered cases. For $m=3$, Table~\ref{tab_m3} indicates that Gurobi is on average faster from $n=500$ onward and for $m=4$, the results in Table~\ref{tab_m4} imply that only in the cases $n=10$ and $n=20$ our algorithm is on average faster than Gurobi. This suggests that our algorithm is to be preferred for either $m=2$ or for small values of $m$ and $n$ when considering quadratic objectives. However, on the other hand, it also suggests that the complexity improvement in Algorithm~\ref{alg_final} as compared to the initial Algorithm~\ref{alg_init} also leads to a speed-up in practice. Finally, we expect that our approach becomes more competitive when considering non-quadratic objective functions, especially when these cannot be easily linearized or approximated. This is because such problems cannot be solved by specialized mixed-integer convex quadratic solvers anymore. In particular, as demonstrated in Section~\ref{sec_alg_init}, we may not simply solve the quadratic version of the problem and employ a reduction result like Lemma~\ref{lemma_reduction} to conclude that this solution is also optimal for the non-quadratic objective function. In contrast the only potential increase in execution time in our approach is the evaluation of this new objective function in Lines~7, 13, 15,18, 21, 31, 37, 41, and 47 of Algorithm~\ref{alg_final}.

\section{Conclusions}
\label{sec_concl}

In this paper, we consider a resource allocation problem with a symmetric separable convex objective function and structured disjoint interval bound constraints, motivated by electric vehicle charging problems in decentralized energy management (DEM). We present an algorithm that solves four special cases of this problem in  $O \left(\binom{n+m-2}{m-2} (n \log n + nF) \right)$ time, where $m$ is the number of disjoint intervals for each variable and $nF$ represents the number of flops required for one evaluation of the objective function. Our algorithm solves the continuous and integral versions of the problem simultaneously without an increase in computational complexity. Computational experiments indicate that the algorithm is fast enough in practice for successful application in DEM systems. Although generally an increase in the number of intervals $m$ leads to a large increase in execution time, our algorithm still outperforms a general-purpose solver by one or two orders of magnitude for small $m$.

We conclude this paper with several directions for future research. First, our eventual solution approach and algorithm have the same worst-case time complexity for each of the four considered special cases of bound constraints. This may suggest that all these four cases are equally difficult. However, we have also shown that the complexities of finding feasible solutions to these cases differ, with one case being significantly more difficult than the other three. It would be interesting to see whether this fact may be used to derive more efficient algorithms or tighten the complexity bound of Algorithm~\ref{alg_final} for these three other cases.

Finally, one direction for future research is to identify more special cases of the problem that can be solved efficiently. Furthermore, it would be interesting to see whether additional allocation constraints could be incorporated, such as nested or even general submodular constraints. In particular, additional nested constraints would model the scheduling of battery charging with a minimum (dis)charging threshold. Since batteries are expected to play a large role in the ongoing energy transition, such an extension would contribute greatly to the integration of these devices in low-voltage grids.

\bibliographystyle{plain}
\bibliography{lib_onoff}
\end{document}